\title[Q-abelian and $\bQ$-Fano finite quotients of abelian varieties]
{Q-abelian and $\bQ$-Fano finite quotients of abelian varieties}
\author[T.~Shibata]
{Takahiro Shibata}
\date{}
\keywords{abelian varieties, Q-abelian varieties, $\bQ$-Fano varieties}
\subjclass[2010]{Primary 37P55, Secondary 14G05}
\address{National University of Singapore, Singapore 119076, Republic of Singapore}
\email{mattash@nus.edu.sg}
\DeclareMathOperator{\age}{age}
\DeclareMathOperator{\Alb}{Alb}
\DeclareMathOperator{\Aut}{Aut}
\DeclareMathOperator{\Fix}{Fix}
\DeclareMathOperator{\GL}{GL}
\DeclareMathOperator{\grp}{grp}
\DeclareMathOperator{\id}{id}
\DeclareMathOperator{\Ker}{Ker}
\DeclareMathOperator{\pr}{pr}
\DeclareMathOperator{\Stab}{Stab}
\DeclareMathOperator{\tor}{tor}
\DeclareMathOperator{\sHom}{\mathscr{H}\kern -.3pt \mathit{om}}
\newcommand{\bC}{\mathbb{C}}
\newcommand{\bP}{\mathbb{P}}
\newcommand{\bQ}{\mathbb{Q}}
\newcommand{\bZ}{\mathbb{Z}}
\newtheorem{thm}{Theorem}[section]
\newtheorem{lem}[thm]{Lemma}
\newtheorem{cor}[thm]{Corollary}
\newtheorem{prop}[thm]{Proposition}
\newtheorem{ques}[thm]{Question}
\theoremstyle{definition}
\newtheorem{defn}[thm]{Definition}
\newtheorem{rmk}[thm]{Remark}
\newtheorem{example}[thm]{Example}
\newtheorem*{ack}{Acknowledgments}
\newtheorem*{notation}{Notation and Conventions}
\begin{document}

\begin{abstract}
We study finite quotients of abelian varieties (fqav for short) i.e.~quotients of abelian varieties by finite groups.
We show that Q-abelian varieties (i.e.~fqav's with $\bQ$-linearly trivial canonical divisors) are characterized by the existence of quasi\'etale polarized (or int-amplified) endomorphisms.
We show that every fqav has a finite quasi\'etale cover by the product of an abelian variety and a $\bQ$-Fano fqav.
Using such coverings,
we give a characterization of $\bQ$-Fano fqav's,
and show that $\bQ$-Fano fqav's and Q-abelian varieties are ``building blocks'' of general fqav's.
\end{abstract}

\maketitle

\setcounter{tocdepth}{1}

%
%
%
%
\section{Introduction}\label{sec:intro}

The aim of this article is to study \textit{finite quotients of abelian varieties} (\textit{fqav} for short),
that is,
quotients of abelian varieties by finite groups.
Such varieties have been studied since a long time ago.
For example,
Kummer surfaces are classical objects in algebraic geometry,
which are the quotients of abelian surfaces by involution.
Hyperelliptic surfaces,
the \'etale quotients of abelian surfaces which are not abelian, 
are also classical and appear in the Enriques--Kodaira classification of surfaces.

Let us see some of recent works on fqav's.
Yoshihara \cite{Yos95} gave a classification of finite quotients of abelian surfaces.
Auffarth--Lucchini Arteche \cite{ALA20} and Auffarth--Lucchini Arteche--Quezada \cite{ALAQ18} proved that,
if a finite group action on an abelian variety fixes the origin and induces an irreducible action on the tangent space at the origin,
then the quotient is a projective space.
Koll\'ar--Larsen \cite{KL09} proved that any finite quotient of a simple abelian variety of dimension $\ge 4$ is non-uniruled.
Lange \cite{Lan01} and Catenese--Demleitner \cite{CD20} completed a classification of hyperelliptic threefolds i.e.~\'etale quotients of abelian threefolds which are not abelian.

When studying a normal projective variety $X$,
it is fundamental to investigate positivity of the canonical divisor $K_X$.
There are two extreme classes of fqav's based on positivity of $K_X$;
one is the class of \textit{Q-abelian varieties}, which are fqav's with $\bQ$-linearly trivial canonical divisors,
and the other is the class of \textit{$\bQ$-Fano fqav's}, which are fqav's with ample anti-canonical divisors.
We mainly study those two classes in this article.

In Section \ref{sec:q-ab}, we study Q-abelian varieties.
The notion of Q-abelian variety is defined by Nakayama--Zhang \cite[Definition 2.13]{NZ10}.
We show that Q-abelian varieties are characterized by the existence of quasi\'etale polarized (or int-amplified) endomorphisms:

\begin{thm}[Theorem \ref{thm:q-ab}]\label{thm:main1}
Let $X$ be a normal projective variety with klt singularities.
Then the following are equivalent:
\begin{itemize}
\item[(1)] $X$ is a Q-abelian variety.
\item[(2)] $X$ admits a quasi\'etale polarized endomorphism.
\item[(3)] $X$ admits a quasi\'etale int-amplified endomorphism.
\end{itemize}
\end{thm}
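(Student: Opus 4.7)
I would prove the cyclic chain $(1)\Rightarrow(2)\Rightarrow(3)\Rightarrow(1)$. The first two implications are formal; the last is the substantive one and uses the singular Beauville--Bogomolov decomposition.

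\emph{$(1)\Rightarrow(2)$.} Write $X=A/G$ and split each $g\in G$ as $g(x)=\phi_g(x)+t_g$ with $\phi_g\in\Aut_{\grp}(A)$ and $t_g\in A$. The kernel $H=\{g:\phi_g=\id\}$ acts by translations through a finite subgroup $T\subset A$, and after replacing $A$ by the abelian variety $A/T$ we may assume that $G$ acts by group automorphisms fixing the origin. Then $[n]:A\to A$ commutes with every element of $G$ for each $n\ge 2$ and descends to an endomorphism $f_n:X\to X$. Because $K_X\sim_{\bQ}0$, the ramification formula for $f_n$ forces the ramification divisor to vanish, so $f_n$ is quasi\'etale. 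Symmetrizing a $G$-invariant ample class pulled back from $X$ yields an ample $L$ on $X$ with $f_n^{*}L\equiv n^{2}L$, so $f_n$ is polarized.

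\emph{$(2)\Rightarrow(3)$.} Immediate from the definitions.

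\emph{$(3)\Rightarrow(1)$.} Let $f:X\to X$ be quasi\'etale and int-amplified. Quasi\'etaleness and the ramification formula give $K_X\sim f^{*}K_X$. Since all eigenvalues of $f^{*}$ on $N^{1}(X)_{\bR}$ have absolute value strictly greater than $1$, no nonzero class is fixed, so $K_X\equiv 0$. Abundance for klt varieties with numerically trivial canonical class (Kawamata, Nakayama) upgrades this to $K_X\sim_{\bQ}0$. Apply the singular Beauville--Bogomolov decomposition (Greb--Kebekus--Peternell, H\"oring--Peternell, Druel--Guenancia): there is a finite quasi\'etale cover $\pi:\widetilde X\to X$ and a splitting $\widetilde X\cong A\times\prod_i Y_i\times\prod_j Z_j$ with $A$ abelian, each $Y_i$ a strict singular Calabi--Yau, and each $Z_j$ a singular irreducible holomorphic symplectic variety. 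Using that int-amplified endomorphisms lift under quasi\'etale covers (Meng--Zhang-type arguments, after possibly enlarging $\pi$), some iterate of $f$ lifts to a quasi\'etale int-amplified endomorphism $\widetilde f$ of $\widetilde X$; replacing $\widetilde f$ by a further iterate we may assume $\widetilde f$ preserves every factor. Its restriction to any $Y_i$ or $Z_j$ is then quasi\'etale and int-amplified, hence of degree $>1$, so iterating it produces an infinite tower of finite quasi\'etale covers of $Y_i$ (resp.\ $Z_j$); this contradicts the finiteness of the algebraic fundamental group of the smooth locus of a strict Calabi--Yau or IHS established by Greb--Kebekus--Peternell. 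Hence no such factors appear, $\widetilde X=A$ is abelian, and $X$ is Q-abelian.

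\emph{Main obstacle.} The delicate step is the equivariant lifting: one must show that after replacing $f$ by an iterate, it lifts to an endomorphism of the Beauville--Bogomolov cover $\widetilde X$ that is compatible with the product decomposition. This requires functoriality of the decomposition cover under endomorphisms and a uniform bound on the quasi\'etale covers needed, which I would expect to import from prior work of Meng and Zhang on endomorphisms of klt varieties with $K_X\sim_{\bQ}0$. The other steps are either formal or direct appeals to the decomposition theorem and its consequences on fundamental groups.
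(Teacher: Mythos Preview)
Your reduction in $(1)\Rightarrow(2)$ is incorrect. Passing to $A/T$ only kills the translation parts of elements of $H$, not of the remaining cosets: if $g=t_a\circ\phi$ with $\phi\neq\id$, the induced map on $A/T$ is still $t_{\bar a}\circ\phi$, and there is no reason for $\bar a$ to vanish. A concrete counterexample is a bielliptic surface $E\times F/\langle g\rangle$ with $g(x,y)=(x+t,-y)$ for a nonzero $2$-torsion point $t$: here $H=\{e\}$, so $T=\{0\}$ and your reduction does nothing, yet $g$ fixes no point and cannot be conjugated into $\Aut_{\grp}(E\times F)$. Consequently $[n]$ does not commute with $G$ in general (in the example, $[2]\circ g\neq g\circ[2]$). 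The paper fixes this with a different maneuver (Lemma~\ref{lem:trans}/Corollary~\ref{cor:conj}, due to Fakhruddin): one conjugates $G$ by a translation so that every $g(0)$ is \emph{torsion}, not zero; then choosing $m>1$ with $m\,g(0)=g(0)$ for all $g$ makes $[m]$ commute with $G$, and the descended map is polarized. Your argument that the descended endomorphism is quasi\'etale (via $K_X\sim_{\bQ}0$ and the ramification formula) is fine once the endomorphism actually exists.

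For $(3)\Rightarrow(1)$ the paper takes a much shorter route: after deducing $K_X\equiv 0$ exactly as you do, it simply invokes \cite[Theorem~5.2]{Men20}, which already says that a klt variety with numerically trivial canonical class admitting an int-amplified endomorphism is Q-abelian. Your Beauville--Bogomolov sketch is a plausible alternative path to (a version of) that theorem, but the lifting/splitting step you flag as the main obstacle is genuinely nontrivial and is precisely what the cited result of Meng packages; as written your argument defers the real content back to ``Meng--Zhang-type arguments'' anyway, so nothing is gained over the direct citation.
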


Note that the implication $(2) \Rightarrow (3)$ is clear since a polarized endomorphism is int-amplified.
The implication $(3) \Rightarrow (1)$ follows  immediately from Meng \cite[Theorem 5.2]{Men20}.
So it is enough to show that any Q-abelian variety admits a polarized endomorphism.
In fact,
we will show that any fqav admits a polarized endomorphism, following Fakhruddin \cite{Fak03} (cf.~Proposition \ref{prop:pol} below).

In Section \ref{sec:fano}, we study $\bQ$-Fano fqav's,
and give a structure theorem of general fqav's:

\begin{thm}[Theorem {\ref{thm:decomp}}]\label{thm:main2}
Let $X$ be an fqav.
Then there exists a finite quasi\'etale cover $\theta: B \times F \to X$ such that  
$B$ is an abelian variety and $F$ is a $\bQ$-Fano fqav.
\end{thm}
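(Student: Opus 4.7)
The plan is to induct on $\dim X$, using the linear part of the $G$-action on $A$ to peel off a translational abelian factor and recursing on the complementary linear factor.

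Write $X = A/G$ and decompose each $g \in G$ affinely as $g \cdot a = \alpha(g)(a) + t(g)$, where $\alpha \colon G \to \Aut_\grp(A)$ is the linear part and $t$ a $1$-cocycle. Let $B_0 \subset A$ be the identity component of the fixed locus $A^{\alpha(G)}$. By Poincar\'e's complete reducibility theorem (applied via a polarization averaged over the finite group $\alpha(G)$), choose an $\alpha(G)$-invariant complementary abelian subvariety $F_0 \subset A$, so that the sum map $\mu\colon B_0 \times F_0 \to A$ is an isogeny and $\alpha(G)|_{F_0}$ has no positive-dimensional fixed subvariety. If $B_0 = A$, then $G$ acts by translations and $X$ is already abelian, so $B = X$ and $F = \Spec \bC$ work trivially. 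If $B_0 = 0$, the conclusion follows from the characterization of $\bQ$-Fano fqav's developed earlier in Section~\ref{sec:fano}.

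In the intermediate case $0 < \dim B_0 < \dim A$, pass to $\tilde A = B_0 \times F_0$ via $\mu$, lift the $G$-action (resolving any $H^2$-obstruction over the finite group $\ker\mu$ by a further isogeny), and kill the translation cocycle on $F_0$ by an averaging-plus-isogeny trick: since $\alpha(G)|_{F_0}$ has no fixed subvariety, $\sum_g \alpha(g) = 0$ in $\End(F_0) \otimes \bQ$, so $t_F$ becomes a rational coboundary and vanishes after passing to a further isogeny. The resulting $G$-action on $\tilde A = B_0 \times F_0$ takes the form $g\cdot(b, f) = (b + t_B(g), \alpha(g) f)$ with $t_B \colon G \to B_0$ a group homomorphism (since $\alpha|_{B_0} = \id$). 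Set $N := \ker t_B$ and $T := G/N \hookrightarrow B_0$. Quotienting by the normal subgroup $N$ (which fixes $B_0$ pointwise) gives $\tilde A/N = B_0 \times (F_0/N)$, and the residual $T$-action is diagonal, free on $B_0$ by translations. Hence $B_0 \times (F_0/N) \to \tilde A/G$ is finite \'etale, and composing with the descent of $\mu$ yields a finite quasi\'etale cover $B_0 \times (F_0/N) \to X$.

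Since $\dim(F_0/N) = \dim F_0 < \dim X$, applying the induction hypothesis to the fqav $F_0/N$ produces a cover $B' \times F' \to F_0/N$ with $B'$ an abelian variety and $F'$ a $\bQ$-Fano fqav. Composing yields $(B_0 \times B') \times F' \to X$ of the desired form.

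The main obstacle is the base case $B_0 = 0$, namely understanding when the quotient of $A$ by a finite group whose linear part has no fixed subvariety admits a product decomposition of the required type; this is precisely the characterization of $\bQ$-Fano fqav's carried out earlier in Section~\ref{sec:fano}. Everything else in the argument reduces to standard manipulation of $1$-cocycles with values in the divisible group $A$, together with clearing finite obstructions by passing to further isogenies.
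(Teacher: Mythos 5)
Your overall strategy (split $A$ up to isogeny into a fixed part and a complement, kill the translation cocycle on the complement, recurse) is in the same spirit as the paper's proof, but the group you use to define the splitting is the wrong one, and this breaks the base case. You take $B_0$ to be the identity component of $A^{\alpha(G)}$, the fixed locus of the \emph{full} linear part of $G$, and assert that when $B_0=0$ the quotient is handled by the $\bQ$-Fano characterization. This is false: the condition $B_0=0$ does not imply that $X=A/G$ is $\bQ$-Fano (nor that $q^\circ(X)=0$). The paper's own Example \ref{example:ratq-ab} ($A=E_i\times E_i$, $G$ generated by $i\times i$) has $A^{\alpha(G)}$ finite, yet $X$ is Q-abelian with $K_X\sim_{\bQ}0$; Example \ref{example:nonfanorat} ($A=E\times E_i$, $G$ generated by $[-1]\times i$) also has $A^{\alpha(G)}$ finite but $\kappa(-K_X)=1<\dim X$. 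In both cases your induction terminates at a quotient that is not $\bQ$-Fano, so the claimed decomposition is not produced. There is also a circularity: the characterization of $\bQ$-Fano fqav's (Corollary \ref{cor:fano}) is \emph{deduced from} Theorem \ref{thm:decomp}, so it cannot be invoked in its proof; the only input available at this stage is Lemma \ref{lem:-k}, whose criterion is $\dim\bigcap_i A_i=0$ for the codimension-one subvarieties $A_i$ underlying the ramification divisor --- a strictly stronger condition than $B_0=0$.

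The missing idea is that positivity of $-K_X$ is governed not by the whole linear action but by the elements of $G$ whose fixed locus is a divisor (the quasi-reflections), since only these contribute to $R_\pi$ and hence to $-K_X$ via the ramification formula. The paper first passes to the normal subgroup $N\le G$ generated by all $g$ with $\dim\Fix(g)=\dim A-1$, checks that $A/N\to X$ is quasi\'etale (so it suffices to decompose $A/N$), and only then takes $B$ to be the identity component of the fixed locus of the \emph{holonomy part of $N$}. With that choice the complementary factor $C/\tilde N_C$ has ramification divisor whose underlying abelian subvarieties intersect in a finite set after finitely many iterations, so Lemma \ref{lem:-k} applies and yields $\bQ$-Fano-ness of the fiber factor. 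Your cocycle manipulations (averaging $t$ over $F_0$, clearing denominators by an isogeny, descending the product structure) are fine in themselves and parallel the paper's handling of $\tilde N$ on $B\times C$, but without replacing $A^{\alpha(G)}$ by the fixed part of the reflection subgroup's holonomy, the argument does not close.
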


Let us compare this theorem with other known results.
Yoshikawa \cite{Yos21} proved that any klt normal projective variety with an int-amplified endomorphism admits a finite quasi\'etale cover by a variety whose Albanese morphism is surjective and of Fano type (cf.~\cite[Theorem 1.3]{Yos21}).
We might regard Theorem \ref{thm:main2} as a precise form of \cite[Theorem 1.3]{Yos21} restricted to fqav's,
since any fqav admits a polarized (and so int-amplified) endomorphism by Proposition \ref{prop:pol} below.

Moreover, Theorem \ref{thm:main2} might also be seen as a precise form of a theorem of Matsumura--Wang \cite[Theorem 1.1]{MW21} restricted to fqav's.
Their theorem says, roughly speaking, that a klt pair with nef anti-log canonical divisor admits a finite quasi\'etale cover which has a \textit{locally constant} MRC fibration.
Note that any fqav has nef anti-canonical divisor (cf.~Lemma \ref{lem:nef} below).
In our Theorem \ref{thm:main2},
$Y=B \times F$ has a trivial MRC fibration $\pr_1: Y \to B$ (the canonical projection)  which is in particular locally constant (cf.~\cite[Definition 2.3]{MW21}).

As a corollary of Theorem \ref{thm:main2}, we give a characterization of $\bQ$-Fano fqav's:

\begin{cor}[Corollary \ref{cor:fano}]\label{cor:main2-1}
Let $X$ be an fqav.
Then the following are equivalent:
\begin{itemize}
\item[(1)] $q^\circ(X)=0$.
\item[(2)] $X$ is $\bQ$-Fano.
\item[(3)] $X$ is of Fano type.
\end{itemize}
\end{cor}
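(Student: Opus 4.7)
The plan is to establish the cycle $(2) \Rightarrow (3) \Rightarrow (1) \Rightarrow (2)$, using Theorem \ref{thm:main2} as the crucial input for the last implication.

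First, $(2) \Rightarrow (3)$ is immediate by taking $\Delta = 0$: an fqav has klt (in fact quotient) singularities, so $(X,0)$ is a klt pair and $-K_X$ is ample by hypothesis. For $(3) \Rightarrow (1)$ the strategy is to show that Fano type is preserved under quasiétale covers. Given $(X,\Delta)$ klt with $-(K_X+\Delta)$ ample and a finite quasiétale cover $f\colon Y \to X$, one defines $\Delta_Y$ by $K_Y + \Delta_Y = f^*(K_X + \Delta)$; since $f$ is étale in codimension one this yields a klt log Fano pair on $Y$, because klt descends under quasiétale covers and ampleness is preserved under pullback by finite surjective maps. By the rational connectedness theorem for klt log Fano varieties (Hacon--McKernan, Zhang), $Y$ is rationally connected, hence $q(Y) = 0$. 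Taking the supremum over all finite quasiétale covers gives $q^\circ(X) = 0$.

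The main implication is $(1) \Rightarrow (2)$, and Theorem \ref{thm:main2} does the heavy lifting. It supplies a finite quasiétale cover $\theta\colon B \times F \to X$ with $B$ an abelian variety and $F$ a $\bQ$-Fano fqav. Composing quasiétale covers of $B \times F$ with $\theta$ produces quasiétale covers of $X$, so $q^\circ(B \times F) \le q^\circ(X) = 0$; on the other hand the projection $B \times F \to B$ gives $q(B \times F) \ge \dim B$, which forces $\dim B = 0$. Thus $\theta\colon F \to X$ realizes $X$ as a quasiétale quotient of the $\bQ$-Fano fqav $F$. Since $\theta$ is étale in codimension one, $\theta^* K_X = K_F$, so $-K_X$ pulls back to the ample divisor $-K_F$; ampleness descends along finite surjective morphisms, giving $-K_X$ ample. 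Combined with the klt property of $X$, this yields $(2)$.

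The principal obstacle is entirely absorbed into Theorem \ref{thm:main2}; once the product decomposition is in hand, the remaining arguments are standard descent and pullback statements. The only minor technical points are the compatibility of Fano type and of $q^\circ$ with quasiétale covers, and the descent of ampleness along finite surjective maps, all of which are routine given the codimension-one étale hypothesis.
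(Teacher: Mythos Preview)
Your proof is correct and follows essentially the same route as the paper: the cycle $(2)\Rightarrow(3)\Rightarrow(1)\Rightarrow(2)$, with Theorem~\ref{thm:main2} supplying the cover $B\times F\to X$ for the last step, and rational connectedness of log Fano varieties (via~\cite{QZ06}) for $(3)\Rightarrow(1)$. The only cosmetic difference is that the paper phrases the vanishing of $\dim B$ via the Albanese morphism $\pr_1\colon B\times F\to B$, whereas you use the inequality $\dim B\le q(B\times F)\le q^\circ(X)=0$ directly; these are the same observation.
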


Here $q^\circ(X)$ denotes the maximum of the irregularities of all finite quasi\'etale covers of $X$ (cf.~Definition \ref{defn:qcirc} below).

We give another corollary, which says, roughly speaking, 
that Q-abelian varieties and $\bQ$-Fano fqav's are ``building blocks'' of general fqav's:

\begin{cor}[Corollary \ref{cor:fib}]\label{cor:main2-2}
Let $X$ be an fqav.
Then we have a sequence of morphisms 
$$X=X_0 \overset{q_0}{\to} X_1 \overset{q_1}{\to} \cdots \overset{q_{r-1}}{\to} X_r=Y$$
such that 
\begin{itemize}
\item $X_i$ is an fqav for any $i$.
\item For $0 \le i \le r-1$,
$q_i: X_i \to X_{i+1}$ is a surjective morphism with connected fibers and it fits into the following diagram.
$$
\xymatrix@C=40pt{
 B_i \times F_i \ar[r]^{\pr_1} \ar[d]_{\theta_i} & B_i \ar[d]^{\psi_i}  \\
 X_{i} \ar[r]^{q_i}  & X_{i+1}
}
$$
Here $B_i$ is an abelian variety, $F_i$ is a $\bQ$-Fano fqav,
$\pr_1$ is the projection,
$\theta_i$ is a finite quasi\'etale cover,
and $\psi_i$ is a finite cover.
\item $Y$ is a Q-abelian variety (possibly $Y$ is a point).
\end{itemize}
\end{cor}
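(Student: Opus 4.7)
The plan is to construct the sequence inductively: at each step apply Theorem \ref{thm:main2} to the current fqav $X_i$ and descend the first projection of the resulting product cover to produce the next fqav $X_{i+1}$. Termination is forced by a strict drop in dimension.

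Setting $X_0 := X$, assume inductively that $X_i$ has been built as an fqav. If $X_i$ is Q-abelian (in particular if $\dim X_i = 0$), stop and set $Y := X_i$. Otherwise, Theorem \ref{thm:main2} produces a finite quasi\'etale cover $\theta_i: B_i \times F_i \to X_i$ with $B_i$ abelian and $F_i$ a $\bQ$-Fano fqav, and $F_i$ must be positive-dimensional: otherwise $\theta_i: B_i \to X_i$ would be a finite quasi\'etale cover by an abelian variety, forcing $X_i$ to be Q-abelian. After passing to a Galois closure of $\theta_i$ and reapplying Theorem \ref{thm:main2} to it if necessary, we may assume $\theta_i$ is Galois with deck group $G_i$. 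Functoriality of the Albanese morphism---together with the identification $\Alb(B_i \times F_i) = B_i$, valid because the klt $\bQ$-Fano variety $F_i$ satisfies $\Alb(F_i) = 0$---yields a canonical $G_i$-action on $B_i$ relative to which $\pr_1$ is equivariant. Setting $X_{i+1} := B_i / G_i$ then gives an fqav, a finite quotient map $\psi_i: B_i \to X_{i+1}$, and an induced surjection $q_i: X_i \to X_{i+1}$ fitting into the required commutative square; the fiber of $q_i$ over $\bar b \in X_{i+1}$ is identified with $F_i / \Stab_{G_i}(b)$ and is therefore connected. Since $\dim X_{i+1} = \dim B_i < \dim X_i$, the iteration terminates after finitely many steps at a Q-abelian $Y = X_r$.

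The main obstacle is the Galois reduction step: the Galois closure of a product cover $B_i \times F_i \to X_i$ is not \emph{a priori} of the product form required, so reinvoking Theorem \ref{thm:main2} on the closure may alter both factors, and one has to check that this replacement can always be organized so that the abelian factor on the cover is canonically identified with $\Alb$ and thereby inherits a $G_i$-action. Once this reduction is in hand the Albanese-based descent is essentially formal, and the fiber-connectivity of $q_i$ follows automatically from the transitivity of $G_i$ on $\psi_i^{-1}(\bar b)$. An alternative route that avoids Galois closures is to work with the (possibly non-Galois) $\theta_i$ directly and define $X_{i+1}$ via the Stein factorization of the rational map $X_i \dashrightarrow B_i / {\sim}$ where $\sim$ is the equivalence relation on $B_i$ induced by $\theta_i \circ \pr_1^{-1}$, but then verifying that the base is again an fqav requires extra argument.
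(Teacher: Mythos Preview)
Your overall strategy---iterate Theorem~\ref{thm:main2}, pass to a Galois closure, and descend the projection via Albanese functoriality---is exactly the paper's. The gap you flag is also real: one cannot in general arrange for the product cover $B_i\times F_i\to X_i$ itself to be Galois, and ``reapplying Theorem~\ref{thm:main2}'' to the Galois closure does not obviously produce a Galois \emph{product} cover, so your descent of the $G_i$-action to $B_i$ is not justified as written.

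The paper sidesteps this obstacle rather than solving it. It takes the Galois closure $\theta':Z\to X_i$ of $\theta_i$ with group $G$, but does \emph{not} ask $Z$ to be a product. Instead it works with the Albanese morphism $p':Z\to B':=\Alb(Z)$ directly. Proposition~\ref{prop:q} gives $\dim B'=\dim B_i$, so a general fibre $W$ of $p'$ maps finitely and quasi\'etale onto a fibre $\{b\}\times F_i$ and is therefore $\bQ$-Fano, hence rationally connected; this is what forces the $G$-action on $Z$ to descend to $B'$, making $p'$ $G$-equivariant, and one sets $X_{i+1}:=B'/G$. The map $\psi_i:B_i\to X_{i+1}$ in the diagram (with the \emph{original} $B_i$, not $B'$) is then obtained a posteriori from the rigidity lemma, after checking that $q_i\circ\theta_i$ collapses each slice $\{b\}\times F_i$ to a point. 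Connectedness of the fibres of $q_i$ comes for free because $p'$ already has connected fibres (Theorem~\ref{thm:surj}) and $q_i$ is its $G$-quotient. So the missing ingredient in your argument is not a Galois reduction for product covers but simply the willingness to run the Albanese descent on the non-product $Z$, control $\dim\Alb(Z)$ via Proposition~\ref{prop:q}, and recover $\psi_i$ afterwards by rigidity.
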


Thus any fqav admits a tower fibrations whose fibers are constructed from $\bQ$-Fano fqav's and the base of the last fibration is a Q-abelian variety.

Finally, in Section \ref{sec:ex},
we construct some examples of fqav's,
focusing on relationships between positivity of canonical divisors and rationality of the varieties.
Recall the following problem.

\begin{ques}[cf.~{\cite[Question 4.4]{Fak03}}]\label{ques:fak}
Is a smooth projective rationally connected variety admitting a polarized endomorphism a toric variety?
\end{ques}

We construct rational fqav's which are rational but not of Fano type.
It is well-known that a projective toric variety is of Fano type,
so we see that the smoothness assumption in Question \ref{ques:fak} is essential.
Those examples also show that an fqav might be rational, regardless of the positivity of its anti-canonical divisor.

\begin{ack}
I thank Professor De-Qi Zhang for valuable discussions and comments and Yohsuke Matsuzawa for answering my questions.
I am supported by a Research Fellowship of NUS.
\end{ack}

\section{Preliminaries}\label{sec:prelim}

\begin{notation}
\begin{itemize}
$ \, $
\item Throughout this article, we work over the complex number field $\bC$.

\item For definitions of Kawamata log terminal (klt) singularities and log canonical (lc) singularities etc., 
we refer to \cite{KM98}.
Note that we always assume that the boundary divisor $\Delta$ of a log pair $(X, \Delta)$ is an effective $\bQ$-divisor.


\item A \textit{$\bQ$-Fano variety} is a normal projective variety $X$ which has klt singularities and an ample anti-canonical divisor.

\item A \textit{log Fano pair} is a klt pair $(X,\Delta)$ such that $-(K_X+\Delta)$ is ample.

\item A \textit{variety of Fano type} is a normal projective variety $X$ such that there is a boundary $\bQ$-divisor $\Delta$ on $X$ such that $(X,\Delta)$ is log Fano.

%

\item For a normal projective variety $X$,
$q(X)=h^1(X, \mathcal O_X)$ denotes the \textit{irregularity} of $X$.

\item For a morphism $f: X \to X$ from a variety $X$ to itself,
$\Fix(f)$ denotes the set of $f$-fixed points on $X$.

\item For a variety $X$, $\Aut(X)$ denotes the automorphism group of $X$.
When $X$ is an algebraic group, $\Aut_{\grp}(X)$ denotes the group of automorphisms of algebraic group.


\item For an abelian variety $A$,
$A_{\tor}$ denotes the set of torsion points on $A$.

\item For an abelian variety $A$ and a point $a \in A$,
$t_a: A \to A$ denotes the translation map by $a$.

\item Let $A$ be an abelian variety and a morphism $g: A \to A$.
Then we can write $g=t_a \circ \phi$ with $\phi: A \to A$ being a group homomorphism.
We say that $\phi$ is the \textit{holonomy part} of $g$.

Given a group $G < \Aut(A)$, the set $G_0 \subset \Aut_{\grp}(A)$ of the holonomy parts of the elements of $G$ forms a group.
We say that $G_0$ is the \textit{holonomy part} of $G$.


\end{itemize}

\end{notation}

In what follows,
we recall some basic notions and prove some lemmas which are needed in later sections.

First we recall the notions of polarized endomorphism and int-amplified endomorphism.

\begin{defn}\label{defn:endo}
Let $X$ be a projective variety.

(1) 
A \textit{polarized endomorphism} on $X$ is a finite surjective morphism $f: X \to X$ such that $f^*H \sim dH$ for some ample divisor $H$ and some $d >1$.
Note that we do NOT regard $f$ as a polarized endomorphism when $f^*H \sim H$.

(2) 
An \textit{int-amplified endomorphism} on $X$ is a finite surjective morphism $f: X \to X$ such that $f^*H -H$ is ample for some ample divisor $H$.
\end{defn}

Note that a polarized endomorphism is int-amplified.
For details on int-amplified endomorphisms, see \cite{Men20}.

Next, we define some basic notions on finite covers of normal varieties.

\begin{defn}\label{defn:cover}
Let $f: X \to Y$ be a finite surjective morphism of normal varieties.
We call $f$ a \textit{finite cover} of $X$.
\begin{itemize}
\item We denote the \textit{ramification divisor} of $f$ by $R_f$.

\item We denote the support of $R_f$ by $\tilde{R}_f$.

\item Set $\tilde{B}_f=f(\tilde{R}_f) \subset Y$, which is a closed subset of pure codimension 1.

\item We say that $f$ is \textit{quasi\'etale} if $R_f=0$.

\item We say that $f$ is \textit{Galois} if the field extension $k(X)/k(Y)$ is a finite Galois extension.
This is equivalent to say that $f$ is the quotient map by a finite group acting on $X$.
\end{itemize}
\end{defn}

Let us recall the definition of Q-abelian varieties.

\begin{defn}\label{defn:q-ab}
A \textit{Q-abelian variety} is a normal projective variety $X$ which admits a finite quasi\'etale cover $\pi: A \to X$ with $A$ being an abelian variety.
\end{defn}

Taking a Galois closure, a Q-abelian variety always admits a finite quasi\'etale Galois cover by  an abelian variety.
So any Q-abelian variety is an fqav.
It is clear that an fqav is Q-abelian if and only if it has $\bQ$-linearly trivial canonical divisor.

There is a characterization of uniruledness of Q-abelian varieties due to Koll\'ar--Larsen.

\begin{defn}[cf.~{\cite[Definition 1]{KL09}}]
(1) Let $V$ be an $n$-dimensional complex vector space.
For any $f \in \GL(V)$ of finite order, the \textit{age} of $f$ is the following quantity:
$$\age(f)=r_1+\cdots+r_n,$$
where $e^{2\pi \sqrt{-1} r_1},\ldots,e^{2 \pi \sqrt{-1} r_n}$ are the eigenvalues of $f$ with multiplicity and $0 \le r_i <1$ for all $i$.

(2) Let $\rho: G \to \GL(V)$ be a finite-dimensional linear representation of a finite group $G$.
We say that $\rho$ satisfies the (\textit{local}) \textit{Reid--Tai condition} if $\age(\rho(g)) \ge 1$ for any $g \in G$ such that $\rho(g) \neq \id_V$.

(2) Let $X$ be a smooth projective variety and $G < \Aut(X)$ a finite subgroup.
For any point $x \in X$, $\Stab_G(x)$ denotes the stabilizer of $x$ and $T_{X,x}$ denotes the tangent space of $X$ at $x$.
We say that the $G$-action on $X$ satisfies the (\textit{global}) \textit{Reid--Tai condition} if the induced action of $\Stab_G(x)$ on $T_{X,x}$ satisfies the local Reid--Tai condition for any $x \in X$.
\end{defn}

\begin{thm}[cf.~{\cite[Theorem 10]{KL09}}]\label{thm:kl}
Let $X=A/G$ be a Q-abelian variety.
Then the following are equivalent:
\begin{itemize}
\item[(1)] $X$ has canonical singularities.
\item[(2)] $\kappa(X)=0$.
\item[(3)] $X$ is non-uniruled.
\item[(4)] $G$ satisfies the Reid--Tai condition.
\end{itemize}
\end{thm}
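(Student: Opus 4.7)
The plan is to establish $(1)\Leftrightarrow(4)$ and $(1)\Rightarrow(2)\Rightarrow(3)\Rightarrow(1)$.

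For $(1)\Leftrightarrow(4)$, I would apply the classical Reid--Tai criterion. Because $\pi\colon A\to X$ is quasi\'etale, no element of $G$ acts as a pseudo-reflection on $A$. The Reid--Tai criterion then says that $X$ has canonical singularities if and only if, for every $x\in A$ and every $g\in\Stab_G(x)$ acting non-trivially on $T_{A,x}$, one has $\age(g|_{T_{A,x}})\ge 1$. Since translations act trivially on tangent spaces, $g$ acts on $T_{A,x}$ through its holonomy part $\rho(g)$, and (via the canonical identification $T_{A,x}\cong T_{A,0}$) this is exactly the Reid--Tai condition of (4).

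For $(1)\Rightarrow(2)\Rightarrow(3)$, the quasi\'etale ramification formula gives $\pi^{*}K_{X}=K_{A}\sim 0$. Since $\pi$ is finite, the kernel of $\pi^{*}\colon\Pic(X)\to\Pic(A)$ is torsion (an application of the norm map shows $|G|\cdot K_X\sim 0$), so $NK_{X}\sim 0$ for some positive integer $N$. Choosing a resolution $f\colon Y\to X$, canonicity gives $K_{Y}=f^{*}K_{X}+\sum a_i E_i$ with all $a_i\ge 0$ and all $E_i$ exceptional; thus $NK_{Y}\sim N\sum a_i E_i$ is effective and rigid, giving $\kappa(X)=\kappa(Y)=0$. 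The implication $(2)\Rightarrow(3)$ is automatic: a uniruled variety has Kodaira dimension $-\infty$, so $\kappa(X)=0$ rules out uniruledness.

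The main obstacle is $(3)\Rightarrow(1)$, which I would prove by contrapositive, following Koll\'ar--Larsen. Suppose Reid--Tai fails, so there is $g\in G$ with a fixed point $x\in A$ and $\age(\rho(g))<1$. Since $A/\langle g\rangle\to A/G$ is finite surjective and uniruledness descends along surjective morphisms, it suffices to produce uniruledness of $A/\langle g\rangle$; replace $G$ by $\langle g\rangle$. At every $y\in F=\Fix(g)$ the singularity of $X$ is a cyclic quotient $\bC^{d}/\bZ_{n}$ of weights $(a_{1},\dots,a_{d})$ with $\sum a_{i}<n$, which is klt but not canonical; the weighted blow-up along these weights yields a partial resolution whose exceptional divisor is a weighted projective bundle over $F/\langle g\rangle$ with rationally connected fibers $\bP(a_{1},\dots,a_{d})$. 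Promoting these fiber rational curves into a covering family on $X$ is the technical heart: I would perform the weighted blow-up simultaneously along every $A$-translate of $F$ (a finite union of translates of abelian subvarieties) to produce an $A$-invariant family of rational curves on the partial resolution, which descends to a dominating family of rational curves on $X$ and proves uniruledness. This globalization via the translation action of $A$ is the step I expect to require the most care.
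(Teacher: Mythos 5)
The paper does not prove this theorem; it is quoted verbatim from Koll\'ar--Larsen \cite{KL09}, so there is no internal proof to compare against. Judged on its own merits, your overall architecture $(1)\Leftrightarrow(4)$ and $(1)\Rightarrow(2)\Rightarrow(3)\Rightarrow(1)$ is sound, and three of the four arrows are essentially correct: the local Reid--Tai criterion applies because $\Stab_G(x)$ acts faithfully and without quasi-reflections on $T_{A,x}$ (quasi\'etaleness of $A\to X$ rules out quasi-reflections), so \'etale-locally $X$ at $\pi(x)$ is the linear quotient $T_{A,x}/\Stab_G(x)$; the torsion-ness of $K_X$ plus canonicity gives $\kappa(X)=0$ via rigidity of effective exceptional divisors; and $(2)\Rightarrow(3)$ is standard.

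The genuine gap is in $(3)\Rightarrow(1)$, precisely at the step you flag as the technical heart, and the mechanism you propose there does not work. First, the translates $F+a$ for $a\in A$ form a continuous family and are not $\langle g\rangle$-invariant, so ``blowing up along every $A$-translate of $F$'' is not a well-defined operation on $X=A/\langle g\rangle$. More fundamentally, even on the single weighted blow-up $f\colon Y\to X$ along (the image of) $F$, the rational curves in the fibers $\bP(a_1,\dots,a_d)$ of the exceptional divisor $E$ are $f$-exceptional, hence map to points of $X$ and can never yield a dominating family there; worse, for a line $\ell$ in such a fiber one has $K_Y\cdot\ell=(\age(g)-1)\,(E\cdot\ell)>0$ since $E\cdot\ell<0$, so these curves do not even deform out of $E$. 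The rational curves witnessing uniruledness of $X$ are unrelated to the fibers of $E$. The correct route (and the one underlying \cite{KL09}) uses the blow-up only to record the numerical identity $K_Y\equiv(\age(g)-1)E$ with $\age(g)-1<0$ and $E$ effective: a covering family of curves $C$ cut out by general very ample divisors satisfies $E\cdot C>0$, hence $K_Y\cdot C<0$, and Miyaoka--Mori bend-and-break (equivalently, BDPP: $K_Y$ is not pseudo-effective) gives uniruledness of $Y$, hence of the birationally equivalent $X$. Replacing your globalization paragraph by this bend-and-break argument repairs the proof; as written, the implication $(3)\Rightarrow(1)$ is not established.
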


\begin{rmk}
More generally, Koll\'ar--Larsen proved Theorem \ref{thm:kl} for finite quotients of projective varieties with canonical singularities and numerically trivial canonical divisors. For details, see \cite{KL09}.
\end{rmk}

The following lemma implies that any fqav naturally admits a structure of klt pair.

\begin{lem}\label{lem:klt}
Let $Y$ be a smooth variety of dimension $n$,
$G < \Aut(Y)$ a finite group,
and $\pi:Y \to X$ the quotient map by $G$.
Let $\tilde{B}_\pi=\bigcup_i B_i$ be the irreducible decomposition and let $e_i \in \bZ_{>1}$ the ramification index over $B_i$.
Set $\Delta=\sum_i (1-1/e_i)B_i$.
Then $(X,\Delta)$ is a klt pair satisfying $K_Y \sim \pi^*(K_X+\Delta)$.
\end{lem}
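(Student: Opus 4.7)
The plan is to first establish the divisor identity $K_Y \sim \pi^*(K_X+\Delta)$ by a codimension-one local calculation along the branch locus, and then deduce that $(X,\Delta)$ is klt via the standard descent of klt singularities along finite covers.

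For the divisor identity, note first that $X$ is normal as the quotient of a smooth variety by a finite group, so $K_X$ is a well-defined Weil divisor. Since both sides are Weil divisors, it suffices to compare them at the generic point of each prime divisor of $Y$, and the fact that $\pi$ is \'etale outside $\pi^{-1}(\tilde{B}_\pi)$ reduces us to the components $\tilde{B}_{i,j}$ of $\pi^{-1}(B_i)_{\mathrm{red}}$. The stabilizer in $G$ of a generic point of $\tilde{B}_{i,j}$ acts faithfully on the one-dimensional conormal line and is therefore cyclic of order $e_i$, generated by a pseudo-reflection. By Chevalley--Shephard--Todd there exist \'etale-local coordinates $t_1,\ldots,t_n$ on $Y$ in which this stabilizer acts via $t_1\mapsto \zeta t_1$ and $t_k\mapsto t_k$ for $k\ge 2$, with $\zeta$ a primitive $e_i$-th root of unity. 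Hence $X$ is \'etale-locally smooth with coordinates $s_1=t_1^{e_i}$, $s_k=t_k$ ($k\ge 2$), and the computation $\pi^*(ds_1\wedge\cdots\wedge ds_n)=e_i\,t_1^{e_i-1}\,dt_1\wedge\cdots\wedge dt_n$ gives $K_Y=\pi^*K_X+(e_i-1)\tilde{B}_{i,j}$ near the generic point of $\tilde{B}_{i,j}$, while $\pi^*B_i=e_i\tilde{B}_{i,j}$ gives $\pi^*((1-1/e_i)B_i)=(e_i-1)\tilde{B}_{i,j}$. Summing over $(i,j)$ yields $\pi^*(K_X+\Delta)=K_Y$. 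The $\bQ$-Cartier property of $K_X+\Delta$ then follows from the Galois property of $\pi$: for $m$ divisible by $|G|$, the $G$-invariant sections of $\mathcal{O}_Y(mK_Y)$ descend to trivialise $\mathcal{O}_X(m(K_X+\Delta))$ locally on $X$.

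With the divisor identity in hand, the klt statement is immediate. Since $Y$ is smooth, the pair $(Y,0)$ is klt, and the identity reads $K_Y+0=\pi^*(K_X+\Delta)$. Applying the standard descent of klt singularities along finite surjective morphisms of normal varieties (Koll\'ar--Mori \cite{KM98}, Proposition 5.20) concludes that $(X,\Delta)$ is klt.

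The only non-bookkeeping obstacle is the codimension-one local normal form of $\pi$: once we know that the generic stabilizer of each $\tilde{B}_{i,j}$ is a cyclic pseudo-reflection group (which is where smoothness of $Y$ and the definition of $e_i$ as ramification index enter crucially), Chevalley--Shephard--Todd supplies the coordinates and the rest is a direct discrepancy computation.
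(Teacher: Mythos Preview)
Your proof is correct and follows essentially the same route as the paper: establish $K_Y \sim \pi^*(K_X+\Delta)$ from the ramification data and then invoke \cite[Proposition~5.20]{KM98}. The only difference is one of detail---the paper simply cites the ramification formula $K_Y \sim \pi^*K_X + \sum_{i,j}(e_i-1)R_{ij}$ together with $\pi^*B_i=\sum_j e_i R_{ij}$, whereas you rederive these via an explicit local-coordinate calculation using the cyclic pseudo-reflection structure of the generic stabilizers.
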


\begin{proof}
By the ramification formula, $K_Y \sim \pi^*K_X + \sum_{i,j} (e_i-1) R_{ij}$,
where $\{R_{ij} \}_j$ is the set of prime divisors over $B_i$.
Now $\pi^*B_i=\sum_j e_i R_{ij}$, 
so $K_Y \sim \pi^*(K_X+\Delta)$.
Hence $(X,\Delta)$ is klt by \cite[Proposition 5.20]{KM98}.
\end{proof}

The following lemma shows that any fqav has a nef anti-canonical divisor.

\begin{lem}\label{lem:nef}
Let $X$ be a normal projective variety and 
$\pi: A \to X$ be a finite cover by an abelian variety $A$.
Then $-K_X$ is nef.
\end{lem}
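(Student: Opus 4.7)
The plan is to combine the ramification formula on $\pi : A \to X$ with the fact that, for $\pi$ finite and surjective, ampleness descends along $\pi^*$. The point is that because $K_A \sim 0$, the pullback $\pi^*(-K_X)$ is $\bQ$-linearly equivalent to the effective ramification divisor, so adding any small ample perturbation on $X$ produces an ample class on $A$, which is enough to conclude nefness of $-K_X$.

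First I would record that $K_X$ is $\bQ$-Cartier. Passing to the Galois closure of $\pi$ presents $X$ as a finite quotient of a normal finite cover of $A$, and combining Lemma \ref{lem:klt} (applied to a smooth $G$-equivariant model) with standard facts on finite quotients shows that $X$ is klt, hence $\bQ$-Gorenstein. Granting this, the ramification formula reads
\[
K_A \sim_{\bQ} \pi^*K_X + R_\pi,
\]
with $R_\pi \ge 0$. Using $K_A \sim 0$, this rewrites as $\pi^*(-K_X) \sim_{\bQ} R_\pi$, which is effective.

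Next, fix any ample divisor $H$ on $X$ and any rational $\epsilon > 0$. Then
\[
\pi^*(-K_X + \epsilon H) \sim_{\bQ} R_\pi + \epsilon \pi^*H.
\]
The summand $\epsilon \pi^*H$ is ample on $A$ (finite pullback of ample is ample), while $R_\pi$ is nef since every effective divisor on an abelian variety is nef: for any curve $C \subset A$, translating $R_\pi$ by a generic $a \in A$ yields an algebraically (hence numerically) equivalent divisor whose support avoids $C$. Thus $R_\pi + \epsilon \pi^*H$ is ample on $A$, and by the standard fact that a divisor on $X$ is ample iff its pullback under a finite surjective map is ample, $-K_X + \epsilon H$ is ample on $X$ for every $\epsilon > 0$. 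Letting $\epsilon \to 0$ gives that $-K_X$ is nef.

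The main obstacle will be cleanly establishing $\bQ$-Cartierness of $K_X$, since the Galois closure of a non-Galois $\pi$ need not itself be (or be covered étale-in-codimension-one by) an abelian variety, and it may develop singularities that require care to track. Once that technicality is handled, the rest is a short manipulation with the ramification formula and elementary positivity facts on abelian varieties.
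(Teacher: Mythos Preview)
Your proof is correct and follows the same route as the paper: the ramification formula gives $\pi^*(-K_X)\sim R_\pi$, effective divisors on an abelian variety are nef, and positivity descends along the finite map $\pi$. The paper's version is terser---it omits the $\epsilon$-perturbation (once $\pi^*(-K_X)$ is nef one concludes directly, since nefness can be tested after a finite surjective pullback) and does not comment on the $\bQ$-Cartierness of $K_X$ at all.
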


\begin{proof}
The ramification formula implies  $\pi^*(-K_X) \sim R_{\pi}$.
Now $R_{\pi}$ is an effective divisor on the abelian variety $A$, so it is nef.
Hence $-K_X$ is also nef.
\end{proof}

Recall the definition of $q^\circ(X)$ by Nakayama--Zhang \cite[Definition 2.6]{NZ10}.

\begin{defn}\label{defn:qcirc}
Let $X$ be a normal projective variety.
We define 
$$q^\circ(X)=\max_Y q(Y),$$
where $Y$ takes all normal projective varieties which are finite quasi\'etale covers of $X$.
\end{defn}

Note that $q^\circ(X)$ can be infinite.
For example, if $X$ is a curve of genus $\ge 2$,
then $X$ has an \'etale cover from a curve of arbitrarily large genus (e.g.~taking cyclic covers associated to a torsion line bundle of sufficiently large order).

Nevertheless, $q^\circ(X)$ is finite if $X$ is klt and $-K_X$ is nef.
Recall the following result by Q.~Zhang.

\begin{thm}[{\cite[Corollary 2]{QZ05}}]\label{thm:surj}
Let $(X, \Delta)$ be a projective lc pair such that $-(K_X+\Delta)$ is nef.
Then the Albanese morphism of $X$ is surjective with connected fibers.
\end{thm}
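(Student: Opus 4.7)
The plan is to combine Ueno's structure theorem for subvarieties of abelian varieties with Viehweg-type weak positivity, pushing against the nefness of $-(K_X+\Delta)$. Since this is a cited result of Q.~Zhang, I sketch the strategy rather than give full details.

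First, I would pass to a log resolution $\mu:(\tilde X,\tilde\Delta)\to(X,\Delta)$, writing $K_{\tilde X}+\tilde\Delta=\mu^*(K_X+\Delta)+E$ with $E$ effective and $\mu$-exceptional (using the lc hypothesis, which ensures all log discrepancies are $\ge 0$). Since $\Alb$ is a birational invariant among smooth projective varieties and surjectivity and connectedness of Albanese fibers descend under birational modifications, it suffices to prove the claim for the resolved pair, where the nefness of $-\mu^*(K_X+\Delta)=-(K_{\tilde X}+\tilde\Delta-E)$ is retained in a controlled form.

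Second, let $\alpha:\tilde X\to A:=\Alb(\tilde X)$ and form the Stein factorization $\tilde X\xrightarrow{\pi} T\xrightarrow{g}A$. The image $Z:=g(T)$ generates $A$ as an algebraic group. Suppose for contradiction that either $Z\subsetneq A$, or that $g:T\to Z$ is a nontrivial finite cover. By Ueno's theorem on subvarieties of abelian varieties, in the first case there is an abelian subvariety $B\subsetneq A$ such that $Z/B\subset A/B$ is positive-dimensional and of general type; in the second case, the ramification of the finite map $g$ onto a subvariety of an abelian variety forces positivity of $K_T$ (since $g^*\omega_Z^{\vee}$ is trivial while $g$ carries a nontrivial ramification divisor). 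Either way, one extracts an intermediate fibration $h:\tilde X\to W$ whose base $W$ has a big canonical class on some birational model.

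Third, Viehweg's weak positivity for $h_*\omega_{\tilde X/W}^{\otimes m}$ combined with the bigness of $K_W$ produces a nonzero effective $\bQ$-divisor inside some multiple of $K_{\tilde X}$; added to the effective $\tilde\Delta$, this gives an effective, big contribution in $K_{\tilde X}+\tilde\Delta$, which upon intersecting with a general complete-intersection curve contradicts the nefness of $-(K_{\tilde X}+\tilde\Delta-E)$ (the exceptional $E$ is harmless against a curve chosen to avoid the exceptional locus). The main obstacle, and the technical heart of \cite{QZ05}, is executing this positivity argument in the \emph{lc} (not just klt) regime: when $\Delta$ has a component of coefficient one, the naive weak positivity arguments break down and one must invoke the canonical bundle formula for lc-trivial fibrations to transfer positivity from $W$ back to $\tilde X$ through the boundary $\tilde\Delta$.
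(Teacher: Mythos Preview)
The paper does not prove this theorem at all: it is quoted verbatim as \cite[Corollary 2]{QZ05} and used as a black box, with no proof or sketch given. There is therefore no ``paper's own proof'' to compare your proposal against. You already seem aware of this, since you describe your write-up as a sketch of the cited result.

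As a sketch of Q.~Zhang's argument your outline is broadly on target (Ueno's structure theorem plus semipositivity/weak positivity of direct images, contradicting the nefness of $-(K_X+\Delta)$), but there is one genuine error worth flagging. You write that the lc hypothesis ``ensures all log discrepancies are $\ge 0$'' and hence that $E$ is effective. That is the definition of \emph{canonical}, not log canonical: lc only gives log discrepancies $\ge -1$, so on a log resolution you must allow boundary components of coefficient~$1$ and cannot absorb the discrepancy into an effective $\mu$-exceptional $E$. You acknowledge this difficulty later when you speak of ``the lc (not just klt) regime'', but your first step as written contradicts that and would not go through. In Q.~Zhang's paper this is handled by working directly with the pair and using semipositivity results valid for lc-trivial fibrations, rather than by first stripping off an effective exceptional divisor.
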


\begin{lem}
Let $X$ be a normal projective klt variety with $-K_X$ being nef.
Then $0 \le q^\circ(X) \le \dim X$.
\end{lem}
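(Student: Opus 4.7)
The plan is to reduce to Zhang's surjectivity theorem (Theorem \ref{thm:surj}) by checking that the hypotheses klt and $-K$ nef are preserved under finite quasi\'etale covers.

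The lower bound $q^\circ(X) \ge 0$ is trivial (take $Y = X$). For the upper bound, I would fix an arbitrary finite quasi\'etale cover $\pi: Y \to X$ with $Y$ normal projective, and show $q(Y) \le \dim X$. First, since $\pi$ is quasi\'etale, the ramification formula gives $K_Y \sim_\bQ \pi^* K_X$ (with $R_\pi = 0$), so $-K_Y$ is the pullback of a nef divisor under a finite surjective morphism, hence nef. Second, the klt property transfers along quasi\'etale covers: this follows from the same identity $K_Y \sim_\bQ \pi^* K_X$ together with a discrepancy computation (or directly from \cite{KM98}). Thus $Y$ is itself klt with $-K_Y$ nef.

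Now I would apply Theorem \ref{thm:surj} with $\Delta = 0$ to $Y$, which yields that the Albanese morphism $\alb_Y: Y \to \Alb(Y)$ is surjective. Consequently,
\[
q(Y) \;=\; \dim \Alb(Y) \;\le\; \dim Y \;=\; \dim X.
\]
Taking the supremum over all such $Y$ gives $q^\circ(X) \le \dim X$, and this supremum is in fact a maximum because $q(Y)$ is a bounded integer-valued invariant.

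I do not expect a genuine obstacle here: the only step that requires some care is verifying that both klt singularities and nefness of $-K$ descend to (or rather, are inherited by) a quasi\'etale cover, and both are standard once one writes down $K_Y \sim_\bQ \pi^* K_X$. The finiteness of $q^\circ(X)$ itself is the real content, and it is packaged entirely by Zhang's theorem once these preservation statements are in hand.
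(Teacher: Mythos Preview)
Your proof is correct and follows essentially the same route as the paper: pass the klt and nef-anticanonical hypotheses to the quasi\'etale cover via the ramification formula, then invoke Theorem~\ref{thm:surj} to get surjectivity of the Albanese map. The only point where the paper is marginally more careful is the equality $q(Y)=\dim\Alb(Y)$, which it justifies by observing that $Y$ (being klt) has rational singularities; you asserted this identity without comment, but it is not automatic for arbitrary normal varieties.
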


\begin{proof}
Let $Y$ be any normal projective variety which is a finite quasi\'etale cover of $X$.
Then $Y$ is also klt with $-K_Y$ being nef by the ramification formula.
So the Albanese morphism $a: Y \to \Alb(Y)$ is surjective by Theorem \ref{thm:surj}.
Now we have $q(Y)=\dim \Alb(Y)$ since $Y$ has rational singularities.
So $q(Y) \le \dim Y = \dim X$.
\end{proof}

Finally we prove some lemmas on abelian varieties which we use later.

\begin{lem}\label{lem:cut}
Let $A$ be an abelian variety and $B, C \subset A$ abelian subvarieties of $A$.
Assume that $\dim B=\dim A -1$ and $C \not\subset B$.
Let $D$ be the connected component of $B \cap C$ containing $0$.
Then $(B+p) \cap (C+q)$ is a finite union of translates of $D$ for any $p,q \in A$.
\end{lem}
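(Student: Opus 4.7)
The plan is to reduce to the case where the intersection is non-empty and then invoke the basic structural fact that the intersection of two abelian subvarieties is a closed algebraic subgroup with only finitely many connected components. The dimension hypothesis and the condition $C \not\subset B$ play no role in this statement itself (they are presumably used in subsequent lemmas to control $\dim D$), so my argument will not use them.

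First I would treat the trivial case: if $(B+p) \cap (C+q) = \emptyset$, then the conclusion holds vacuously. Otherwise, pick any point $a \in (B+p) \cap (C+q)$ and write $a = b_0 + p = c_0 + q$ with $b_0 \in B$ and $c_0 \in C$. Since $B$ and $C$ are subgroups, $B + p = B + b_0 + p = B + a$ and $C + q = C + c_0 + q = C + a$, so
$$(B+p) \cap (C+q) = (B + a) \cap (C + a) = a + (B \cap C).$$
Thus the problem reduces to analyzing $B \cap C$.

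Now $B \cap C$ is the scheme-theoretic intersection of two closed subgroup schemes of the abelian variety $A$, hence itself a closed subgroup scheme of finite type. Such a group scheme has only finitely many connected components, the identity component being (by definition) the abelian subvariety $D$, and every other component is a coset $d + D$ for some $d \in B \cap C$. Therefore $B \cap C$ is a finite disjoint union of translates of $D$, and consequently so is $a + (B \cap C) = (B+p) \cap (C+q)$.

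There is no serious obstacle here; the only substantive input is the standard fact that an algebraic subgroup of finite type over $\bC$ has finitely many connected components which are cosets of the identity component. I would simply cite this as a basic property of group schemes (or of algebraic groups), or alternatively note that $B \cap C$ is a closed subset of an abelian variety with a transitive action of the identity component, so its component set is finite.
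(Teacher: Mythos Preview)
Your proof is correct. The paper takes a slightly different route: after translating to reduce to $q=0$, it passes to the quotient $\pi:A\to A/B$ and identifies $(B+p)\cap C$ with the fiber $(\pi|_C)^{-1}(\pi(p))$. The codimension-one hypothesis makes $A/B$ an elliptic curve, and $C\not\subset B$ then forces $\pi|_C$ to be surjective; thus every fiber is a non-empty coset of $\Ker(\pi|_C)=B\cap C$, which has finitely many components. Your argument instead translates by a point of the intersection to reduce directly to $a+(B\cap C)$, so you never invoke the quotient map and never use the two hypotheses (at the price of treating the empty case separately). Both proofs ultimately rest on the same fact---an algebraic group over $\bC$ has finitely many connected components, all cosets of the identity component---but yours is marginally more general, while the paper's version additionally establishes non-emptiness of $(B+p)\cap(C+q)$ as a byproduct.
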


\begin{proof}
We have 
$$(B+p) \cap (C+q)=((B+p-q) \cap C)+q,$$
so we may assume $q=0$.
Take the quotient map $\pi: A \to A/B$,
then 
$$(B+p)\cap C=(\pi|_C)^{-1}(\pi(p)).$$
Now $\pi|_C: C \to A/B$ is surjective by assumption.
So any fiber of $\pi|_C$ is a finite union of translates of the connected component of $(\pi|_C)^{-1}(0)=B \cap C$ containing 0, which is $D$.
\end{proof}

\begin{lem}\label{lem:kappa}
Let $A$ be an abelian variety of dimension $n$.
\begin{itemize}
\item[(1)] 
Let $T_1=A_1+a_1$ and $T_2=A_2+a_2$ be translates of abelian subvarieties of codimesion 1.
Then $T_1 \sim_{\bQ} T_2$ if and only if $A_1=A_2$ and $a_2-a_1 \in A_{\tor}$.
\item[(2)] 
Let $D=\sum_{i=1}^r m_i T_i$ be an effective $\bQ$-divisor, 
where each $T_i=A_i+a_i$ is a torsion translate of an abelian subvariety of codimension 1.
Then $$\kappa(D)=n- \dim \bigcap_{i=1}^r A_i.$$
Moreover, if $\kappa(D)=n$, then $D$ is ample 
and $A$ is isogenous to a product of elliptic curves.
\end{itemize}
\end{lem}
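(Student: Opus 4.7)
The plan is to use the homomorphism $\phi_L : A \to \hat A$, $a \mapsto [t_a^* L \otimes L^{-1}]$, attached to a line bundle $L$, together with the computation $K(L_i) := \ker \phi_{L_i} = A_i$ for $L_i := \mathcal{O}_A(T_i)$; this will let us recover $A_i$ from the numerical class of $T_i$. The backward direction of (1) is immediate from the theorem of the square: if $A_1 = A_2 =: A_0$ and $b := a_2 - a_1 \in A_{\tor}$ has order $k$, then $T_2 = T_1 + b$ and $\mathcal{O}_A(T_2 - T_1) = t_{-b}^* L_1 \otimes L_1^{-1} = \phi_{L_1}(-b)$ is killed by $k$ in $\Pic(A)$, so $k(T_2 - T_1) \sim 0$.

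For the forward direction of (1), each $T_i$ is the pullback of the degree-$1$ divisor $\pi_i(a_i)$ on the elliptic quotient $E_i := A/A_i$ under $\pi_i : A \to E_i$. Decomposing $\phi_{L_i} = \hat\pi_i \circ \phi_{\mathcal{O}_{E_i}(\pi_i(a_i))} \circ \pi_i$ and noting that $\phi_{\mathcal{O}_{E_i}(\pi_i(a_i))}$ is an isomorphism (principal polarization) while $\hat\pi_i$ is injective yields $K(L_i) = \ker \pi_i = A_i$. Since $T_1 \sim_{\bQ} T_2$ forces $\phi_{L_1} = \phi_{L_2}$, we get $A_1 = A_2 =: A_0$; the equivalence then descends along $\pi_1$ to the elliptic curve $E_1$ and forces $\pi_1(a_1 - a_2)$ to be torsion. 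Using that $\pi_1 : A \to E_1$ is surjective with connected kernel $A_0$ (hence divisible), any torsion point of $E_1$ lifts to a torsion point of $A$, so after replacing $a_1$ by a suitable element of $a_1 + A_0$ we obtain $a_2 - a_1 \in A_{\tor}$.

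For part (2), I would first reduce to the case $\bigcap_i A_i = 0$. Let $A_\infty$ be the connected component of $\bigcap_i A_i$ through the origin and $\pi : A \to A/A_\infty$ the quotient; since $A_\infty \subset A_i$, every $T_i$ equals $\pi^* T_i'$ for the torsion translate $T_i' := A_i/A_\infty + \pi(a_i)$. Writing $D = \pi^* D'$ and using $\pi_* \mathcal{O}_A = \mathcal{O}_{A/A_\infty}$ (connected abelian fibres) yields $\kappa(D) = \kappa(D')$, while on $A/A_\infty$ the intersection of the $A_i/A_\infty$ is finite. By part (1), $T_i' \sim_{\bQ} A_i/A_\infty$, so it suffices to show $\sum m_i (A_i/A_\infty)$ is ample. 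After discarding redundant indices the number of summands may be taken equal to $n - d := \dim(A/A_\infty)$, and the product map $\phi := \prod_i \pi_i' : A/A_\infty \to E_1 \times \cdots \times E_{n-d}$ is then an étale isogeny. The principal polarization $F_1 + \cdots + F_{n-d}$ (with $F_i$ the zero fibre of the $i$-th projection) is ample on the product; since $\phi$ is étale, $\phi^* F_i = A_i/A_\infty$ with multiplicity one, so $\sum (A_i/A_\infty)$ is ample, and adding the remaining effective summands preserves ampleness. Thus $D$ is ample in the reduced setting, giving $\kappa(D) = n - \dim \bigcap A_i$ in general; the same isogeny $\phi$ supplies the ``isogenous to a product of elliptic curves'' statement whenever $\kappa(D) = n$.

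The main delicate point is verifying the multiplicity-one pullback $\phi^* F_i = A_i/A_\infty$, so that ampleness (rather than merely bigness) is preserved; this depends essentially on $\phi$ being étale in characteristic zero. A secondary subtlety is ensuring in (1) that torsionness of $\pi_1(a_1 - a_2)$ in $E_1$ can genuinely be promoted to torsionness of $a_2 - a_1$ in $A$ by a judicious choice of representative, for which the divisibility of the connected kernel $A_0$ is the key input.
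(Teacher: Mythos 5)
Your proof is correct and follows essentially the same route as the paper: part (1) via the quotient to the elliptic curve $A/A_i$ (you phrase it through $\phi_L$ and $K(L_i)=A_i$, and are in fact more careful than the paper about the torsion half of the ``only if'' direction), and part (2) by quotienting out the connected component of $\bigcap_i A_i$, selecting a subfamily with finite intersection, and pulling back the product polarization along the resulting isogeny to $E_1\times\cdots\times E_{n-d}$. The only point worth making explicit is that ``adding the remaining effective summands preserves ampleness'' because effective divisors on an abelian variety are nef, so ample plus nef stays ample.
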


\begin{proof}
(1): If $A_1=A_2$, then $T_2=t_{a_2-a_1}^*T_1$, so $T_2-T_1 \sim_{\bQ} 0$.
Conversely, if $A_1 \neq A_2$, then consider the quotient map $p_1: A \to A/A_1$.
Then $T_1$ is $p$-trivial but $T_2$ is not.
So $T_1 \not\sim_{\bQ} T_2$.

(2): By (1), we may assume that $D=\sum_i A_i$.
Let $B$ be the connected component of $\bigcap_{i=1}^r A_i$ containing 0.
Set $k=\dim B$.
Let $\pi:A \to A/B$ be the quotient map and $\overline{A}_i=\pi(A_i)$.
Then $\overline{A}_i$ is a prime divisor on $A/B$ such that $\pi^*\overline{A}_i=A_i$.
If we have $\kappa(\sum_i \overline{A}_i)=\dim A/B =n-k$,
then 
$$\kappa(D)= \kappa(\sum_i A_i)=\kappa(\pi^*(\sum_i \overline{A}_i))=\kappa(\sum_i \overline{A}_i)=n-k.$$
So we may assume that $B=0$.

Then we can choose, say, $A_1, \ldots, A_n$ such that $\bigcap_{i=1}^n A_i$ is finite.
Note that $A_{i_1} \cap \cdots \cap A_{i_m}$ is a finite union of translates of an abelian subvariety for any $A_{i_1},\ldots, A_{i_m}$ due to Lemma \ref{lem:cut}.
Let $p_i:A \to E=A/A_i$ be the quotient map for $1 \le i \le n$ and 
$p=(p_1,\ldots,p_n):A \to E_1 \times \cdots \times E_n$ the induced morphism.
Now $\Ker(p)=\bigcap_{i=1}^n A_i$ has dimension 0 and $\dim A= \dim ( E_1 \times \cdots \times E_n)$,
so $p$ is an isogeny.
Therefore 
$$\mathcal O_A(\sum_i A_i)=p^*(\mathcal O_{E_1}(O_1) \boxtimes \cdots \boxtimes \mathcal O_{E_n}(O_n))$$
is ample,
where $O_i \in E_i$ is the origin. 
So $D$ is ample and $A$ is isogenous to a product of elliptic curves.
\end{proof}

\begin{lem}\label{lem:poincare}
Let $A$ be an abelian variety and $G < \Aut_{\grp}(A)$ a finite subgroup.
For any $G$-invariant abelian subvariety $B \subset A$,
there exists a $G$-invariant abelian subvariety $C \subset A$ such that the addition map $\mu: B \times C \to A$ is an isogeny.
\end{lem}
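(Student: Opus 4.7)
The plan is to prove the equivariant version of Poincar\'e's complete reducibility theorem by running the classical proof with a $G$-invariant polarization in place of an arbitrary one. Recall that the standard proof picks any ample line bundle $L$ on $A$, forms the polarization $\phi_L:A\to A^\vee,\ a\mapsto t_a^*L\otimes L^{-1}$, and then sets $C=(\ker\psi)^\circ$ where $\psi$ is the composition $A\xrightarrow{\phi_L}A^\vee\xrightarrow{\iota^\vee}B^\vee$ induced by the inclusion $\iota:B\hookrightarrow A$; one then checks that $\psi|_B=\phi_{L|_B}$ is an isogeny (because $L|_B$ is ample), so $B\cap C$ is finite, $\dim B+\dim C=\dim A$, and the addition map $\mu$ is an isogeny.

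First I would produce a $G$-invariant ample line bundle by averaging: take any ample $L_0$ on $A$ and set $L=\bigotimes_{g\in G}g^*L_0$, which is ample and satisfies $h^*L\cong L$ for every $h\in G$. Next I would verify that $\phi_L$ is $G$-equivariant in the natural sense. Using $t_{g(a)}=g\circ t_a\circ g^{-1}$ (since each $g\in G$ is a group homomorphism) and the $G$-invariance of $L$, a short calculation gives
\[
\phi_L(g(a))=t_{g(a)}^*L\otimes L^{-1}=(g^{-1})^*\bigl(t_a^*L\otimes L^{-1}\bigr)=(g^{-1})^\vee\phi_L(a),
\]
so $\phi_L\circ g=(g^{-1})^\vee\circ \phi_L$, where $g^\vee:A^\vee\to A^\vee$ denotes the dual isogeny.

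Because $B$ is $G$-invariant, the restriction $g|_B\in\Aut_{\grp}(B)$ is well-defined and $\iota\circ g|_B=g\circ\iota$; dualizing yields $\iota^\vee\circ g^\vee=(g|_B)^\vee\circ\iota^\vee$. Combining this with the previous identity gives $\psi\circ g=((g^{-1})|_B)^\vee\circ\psi$, so $\ker\psi$ is $G$-stable. Its identity component $C=(\ker\psi)^\circ$ is therefore also $G$-stable (a group automorphism permutes the connected components and fixes the one through $0$), and $C$ is a $G$-invariant abelian subvariety. The classical argument recalled above, applied verbatim, then shows that $\mu:B\times C\to A$ is an isogeny.

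The only place where something could go wrong is the equivariance check in the second step; the main potential obstacle is keeping track of the fact that $\phi_L$ intertwines the actions of $g$ and $(g^{-1})^\vee$ rather than $g$ and $g^\vee$, but this sign does not affect $G$-invariance of kernels. All the other ingredients (existence of a $G$-invariant ample bundle via averaging, the standard proof of Poincar\'e reducibility) are routine, so the argument is essentially an equivariant bookkeeping on top of the classical one.
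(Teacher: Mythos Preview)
Your proposal is correct and follows essentially the same approach as the paper: both take a $G$-invariant ample line bundle obtained by averaging, observe that $\phi_L$ is $G$-equivariant for the action $g\cdot\alpha=(g^{-1})^*\alpha$ on the dual, define $C$ as the identity component of $\Ker(\hat\iota\circ\phi_L)$, and conclude via $(\hat\iota\circ\phi_L)|_B=\phi_{L|_B}$ being an isogeny. Your write-up simply spells out the equivariance verification in more detail than the paper does.
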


\begin{proof}
Fix a $G$-invariant ample line bundle $L$ on $A$ (e.g.~$L=\bigotimes_{g \in G}g^*M$ for some ample line bundle $M$).
Then the homomorphism $\phi_L: A \to \hat A$ given by $\phi_L(a)=t_a^*L \otimes L^{-1}$ is $G$-equivariant, 
giving a $G$-action on $\hat A$ by $g \cdot \alpha=(g^{-1})^*\alpha$.
Let $\iota: B \to A$ be the inclusion map and 
$C \subset A$ the connected component of $\Ker(\hat{\iota} \circ \phi_L)$ containing 0.
Now $\hat{\iota} \circ \phi_L$ is $G$-equivariant, so $C$ is $G$-invariant.
Moreover, $(\hat{\iota} \circ \phi_L)|_B=\phi_{L|_B}: B \to \hat{B}$ is an isogeny,
so $B,C \subset A$ satisfy the claim.
\end{proof}


%

%

%
%
%
%
\section{Q-abelian varieties}\label{sec:q-ab}

First, we prove the following.

\begin{prop}[cf.~{\cite[Theorem 4.2]{Fak03}}]\label{prop:pol}
Let $X$ be a finite quotient of an abelian variety $A$.
Then there exists a polarized endomorphism on $A$ which descends to a polarized endomorphism on $X$.
\end{prop}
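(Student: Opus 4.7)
The plan is to follow Fakhruddin's idea: for a well-chosen integer $m>1$, the multiplication-by-$m$ endomorphism $[m]\colon A\to A$ commutes with $G$ and descends to a polarized endomorphism on $X=A/G$. Write each $g\in G$ as $g=t_{a_g}\circ\phi_g$ with $\phi_g$ the holonomy part and $a_g=g(0)\in A$. The key observation is that $\{a_g:g\in G\}=G\cdot 0$ is a finite subset of $A$, hence $a_g\in A_{\tor}$ for every $g$. Choose $N>0$ with $Na_g=0$ for all $g\in G$, and set $m=N+1>1$, so that $ma_g=a_g$ for every $g$.

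Since $\phi_g$ is a group homomorphism it commutes with $[m]$, so
$$[m]\circ g = [m]\circ t_{a_g}\circ \phi_g = t_{ma_g}\circ [m]\circ \phi_g = t_{a_g}\circ \phi_g\circ [m] = g\circ [m].$$
Thus $[m]$ commutes with every element of $G$, and it descends to a finite surjective endomorphism $\bar f\colon X\to X$.

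To produce a polarization, I enlarge $G$ to the finite subgroup $\tilde G:=\langle G,[-1]\rangle\subset\Aut(A)$, pick any ample divisor $D_0$ on $A$, and set $D:=\sum_{s\in\tilde G}s^*D_0$. By construction $D$ is $\tilde G$-invariant as a divisor, hence simultaneously $G$-invariant and symmetric (since $[-1]\in\tilde G$); it is ample as a positive sum of ample divisors. The theorem of the cube gives $[m]^*D\sim m^2D$, so $[m]$ is a polarized endomorphism of $A$. For the descent, set $E:=\pi_*D$. Since $X=A/G$ has quotient singularities and is in particular $\bQ$-factorial, $E$ is $\bQ$-Cartier; and $\pi^*E=\sum_{g\in G}g^*D=|G|\cdot D$ is ample, so $E$ is an ample $\bQ$-divisor on $X$. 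Using $\pi\circ[m]=\bar f\circ\pi$,
$$\pi^*\bar f^*E = [m]^*\pi^*E = |G|\cdot [m]^*D \sim |G|m^2D = \pi^*(m^2E).$$
Injectivity of $\pi^*$ on rational Picard groups (from $\pi_*\pi^*=\deg\pi\cdot\id$) then gives $\bar f^*E\sim_{\bQ}m^2E$. Clearing denominators provides an integer $r>0$ for which $\bar f^*(rE)\sim m^2(rE)$ with $rE$ ample and $m^2>1$, so $\bar f$ is polarized.

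The main subtlety is reconciling $G$-invariance with symmetry of the polarization on $A$, which forces the passage to $\tilde G$; the remaining work is formal, using only the theorem of the cube and the standard relation $\pi_*\pi^*=\deg\pi$ on rational Picard groups to transfer the polarization from $A$ down to $X$.
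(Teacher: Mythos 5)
There is a genuine gap at the very first step. You claim that since $\{a_g : g\in G\}=G\cdot 0$ is a finite subset of $A$, each $a_g$ must be torsion. Finiteness of a subset of an abelian variety does not imply its points are torsion, and $G\cdot 0$ is an orbit, not a subgroup (one has $a_{gh}=\phi_g(a_h)+a_g$, so the set is not closed under the group law of $A$). Concretely, on an elliptic curve $E$ take $g=t_a\circ[-1]$ for a non-torsion point $a$: then $g^2=t_{a-a}=\id$, so $G=\langle g\rangle$ is finite, yet $a_g=a\notin E_{\tor}$. In this situation $[m]\circ g=t_{ma}\circ[-1]\circ[m]$ while $g\circ[m]=t_a\circ[-1]\circ[m]$, so $[m]$ commutes with $g$ only if $(m-1)a=0$; for non-torsion $a$ no $m>1$ works, and your endomorphism does not descend.

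The fix is exactly what the paper does before choosing $m$: conjugate $G$ by a translation. One shows (Lemma \ref{lem:trans}) that there is a point $p\in A$ with $p-g(p)\in A_{\tor}$ for all $g\in G$, so that $G'=t_p^{-1}Gt_p$ has all its elements sending $0$ to torsion points; since $t_p$ induces an isomorphism $A/G\cong A/G'$, one may assume from the start that every $a_g$ is torsion, after which your choice of $m$ with $ma_g=a_g$ and the commutation computation go through. (In the example above, $p$ with $2p=a$ conjugates $g$ to $[-1]$.) The remainder of your argument --- averaging an ample divisor over $\langle G,[-1]\rangle$ to get a symmetric $G$-invariant polarization, applying the theorem of the cube, and descending via $\pi_*$ and the projection formula --- is correct and in fact more self-contained than the paper, which simply invokes \cite[Theorem 1.3]{MZ18} for the descent of polarizedness; but the proof as written fails without the conjugation step.
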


\begin{rmk}
When $X$ is smooth, the assertion is contained in \cite[Theorem 4.2]{Fak03}.
But the proof works for any fqav.
\end{rmk}

We prove Proposition \ref{prop:pol}, following Fakhruddin \cite{Fak03}.

\begin{lem}\label{lem:trans}
Let $A$ be an abelian variety and $G < \Aut(A)$ a finite subgroup.
Then there exists a point $p \in A$ such that $p-g(p) \in A_{\tor}$ for any $g \in G$.
\end{lem}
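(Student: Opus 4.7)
The plan is to reduce the problem to the standard averaging trick that shows $|G|$ annihilates $H^1(G,A)$. First I would write each $g \in G$ in its affine form $g = t_{a_g}\circ \phi_g$ with $a_g := g(0)\in A$ and $\phi_g\in \Aut_{\grp}(A)$ the holonomy part. Applying the composition law $g(p)=\phi_g(p)+a_g$ to the point $h(0)$ yields the cocycle identity
\[
a_{gh} \;=\; \phi_g(a_h)+a_g \qquad (g,h\in G),
\]
i.e.\ $g\mapsto a_g$ is a $1$-cocycle for the $G$-action on $A$ through its holonomy.

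Next I would set $b:=\sum_{h\in G} a_h\in A$ and compute $\phi_g(b)-b$ using the cocycle identity together with the reindexing $h\mapsto g^{-1}h$:
\[
\phi_g(b)-b \;=\; \sum_{h\in G}\bigl(a_{gh}-a_g\bigr) \;-\; \sum_{h\in G} a_h \;=\; -|G|\,a_g.
\]
Thus $|G|\,a_g = b-\phi_g(b)$ for every $g\in G$.

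Finally, since the abelian variety $A$ is a divisible abelian group, I would choose $p\in A$ with $|G|\,p = b$. Then
\[
|G|\bigl(p-\phi_g(p)\bigr) \;=\; |G|p-\phi_g(|G|p) \;=\; b-\phi_g(b) \;=\; |G|\,a_g,
\]
so $|G|\bigl(p-g(p)\bigr)=|G|\bigl(p-\phi_g(p)-a_g\bigr)=0$, and hence $p-g(p)\in A[\,|G|\,]\subset A_{\tor}$ for every $g\in G$, as desired. There is no genuine obstacle: the argument is essentially the Shapiro/averaging proof that $|G|\cdot H^1(G,A)=0$ combined with divisibility of $A$, and the only thing to watch carefully is the sign convention and the reindexing in the computation of $\phi_g(b)-b$.
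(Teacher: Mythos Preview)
Your proof is correct and essentially the same as the paper's: both choose $p\in A$ with $|G|\,p=\sum_{g\in G} g(0)$ and then verify $|G|\bigl(p-g(p)\bigr)=0$ via the same reindexing of the sum $\sum_h a_{gh}=\sum_h a_h$. The only difference is packaging---you phrase it as the averaging argument showing $|G|\cdot H^1(G,A)=0$, while the paper computes $|G|\,g(p)=|G|\,p$ directly---but the underlying computation is identical.
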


\begin{proof}
Write $G=\{ t_{a_\lambda} \circ \varphi_{\lambda} \}_{\lambda \in \Lambda}$,
where $\varphi_{\lambda} \in \Aut_{\grp}(A)$.
Set $N=\# G$ and take $b_\lambda \in A$ such that $Nb_\lambda=a_\lambda$ for each $\lambda \in \Lambda$.
Setting $p=\sum_{\lambda \in \Lambda} b_\lambda$ and taking any $\mu \in \Lambda$, we have 
\begin{align*}
Ng_\mu(p) &= N(\varphi_\mu(\sum_{\lambda \in \Lambda} b_\lambda)+a_\mu) \\
&= \sum_{\lambda \in \Lambda} \varphi_\mu(a_\lambda)+Na_\mu \\
&= \sum_{\lambda \in \Lambda} g_\mu(a_\lambda) \\
&= \sum_{\lambda \in \Lambda} g_\mu(g_\lambda(0)) \\
&= \sum_{\lambda \in \Lambda} g_\lambda(0) \\
&= \sum_{\lambda \in \Lambda} a_\lambda = Np.
\end{align*}
So $p-g_\mu(p) \in A_{\tor}$ for any $\mu \in \Lambda$.
\end{proof}

\begin{cor}\label{cor:conj}
Let $A$ be an abelian variety and $G < \Aut(A)$ a finite subgroup.
Then there is a conjugate subgroup $G'< \Aut(A)$ of $G$ such that $g'(0) \in A_{\tor}$ for any $g' \in G'$.
\end{cor}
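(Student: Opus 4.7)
The plan is to apply Lemma \ref{lem:trans} to get a point $p \in A$ satisfying $g(p) - p \in A_{\tor}$ for every $g \in G$, and then take $G'$ to be the conjugate of $G$ by the translation $t_p \in \Aut(A)$. That is, I would set
$$G' := t_{-p} \circ G \circ t_p = \{ t_{-p} \circ g \circ t_p \mid g \in G \} < \Aut(A).$$

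The key computation is then the following. For any $g \in G$, write $g = t_a \circ \varphi$ with $\varphi \in \Aut_{\grp}(A)$ and $a \in A$. Then for a general $x \in A$,
$$(t_{-p} \circ g \circ t_p)(x) = \varphi(x+p) + a - p = \varphi(x) + (\varphi(p) + a - p) = \varphi(x) + (g(p) - p),$$
so the element $g' := t_{-p} \circ g \circ t_p$ of $G'$ has holonomy part $\varphi$ and satisfies $g'(0) = g(p) - p$. Since Lemma \ref{lem:trans} guarantees $g(p) - p \in A_{\tor}$, we get $g'(0) \in A_{\tor}$, as required.

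There is essentially no obstacle here: given Lemma \ref{lem:trans}, this is just a one-line verification after choosing the right conjugating translation. The only thing to check is that $G'$ is indeed a subgroup of $\Aut(A)$ conjugate to $G$, which is automatic since conjugation by an element of $\Aut(A)$ is a group isomorphism on the ambient group $\Aut(A)$.
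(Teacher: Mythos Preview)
Your proof is correct and is essentially identical to the paper's: both apply Lemma \ref{lem:trans} to obtain $p$, set $G' = t_p^{-1} G t_p$, and observe $(t_p^{-1}\circ g\circ t_p)(0)=g(p)-p\in A_{\tor}$. The paper's version is just the one-line summary of the computation you spelled out.
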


\begin{proof}
By Lemma \ref{lem:trans}, there is a point $p \in A$ such that $(t_p^{-1} \circ g \circ t_p)(0)=g(p)-p \in A_{\tor}$ for any $g \in G$.
So $G'=t_p^{-1}G t_p$ satisfies the claim.
\end{proof}

Using Proposition \ref{prop:pol},
we prove the following characterization of Q-abelian varieties.

\begin{proof}[Proof of Proposition \ref{prop:pol}]
By Corollary \ref{cor:conj}, we can write $X=A/G$ where $G < \Aut(A)$ is a finite subgroup with $g(0) \in A_{\tor}$ for all $g \in G$.
Then we can take $m \in \bZ_{>1}$ such that $mg(0)=g(0)$ for all $g \in G$.
Now the multiplication map $[m]:A \to A$ is commutative with $G$,
so $[m]$ descends to a morphism $f: X \to X$.
Since $[m]$ is polarized,
 $f$ is also polarized (cf.~\cite[Theorem 1.3]{MZ18}).
\end{proof}

Let us prove the main theorem of this section.

\begin{thm}\label{thm:q-ab}
Let $X$ be a normal projective variety with klt singularities.
Then the following are equivalent:
\begin{itemize}
\item[(1)] $X$ is a Q-abelian variety.
\item[(2)] $X$ admits a quasi\'etale polarized endomorphism.
\item[(3)] $X$ admits a quasi\'etale int-amplified endomorphism.
\end{itemize}
\end{thm}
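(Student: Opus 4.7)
The plan is to treat the three implications separately. As already noted in the text, $(2) \Rightarrow (3)$ is immediate since polarized endomorphisms are int-amplified, and $(3) \Rightarrow (1)$ follows directly from \cite[Theorem 5.2]{Men20}. The substantive task is therefore $(1) \Rightarrow (2)$: producing a quasi\'etale polarized endomorphism on an arbitrary Q-abelian variety.

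To construct such an endomorphism, I would begin by writing $X = A/G$ with $\pi : A \to X$ a quasi\'etale Galois cover, which exists by taking the Galois closure of the quasi\'etale cover furnished by the definition of Q-abelian. Proposition \ref{prop:pol}, already established in the excerpt, then yields an integer $m > 1$ such that $[m]:A \to A$ commutes with $G$ and descends to a polarized endomorphism $f:X\to X$ with $f \circ \pi = \pi \circ [m]$. It remains to verify that this $f$ is quasi\'etale. Applying the ramification formula $R_{g \circ h} = h^{*} R_{g} + R_{h}$ for compositions of finite surjective morphisms of normal varieties to both decompositions of the equality $f\circ\pi=\pi\circ [m]$ gives
$$
\pi^{*} R_{f} + R_{\pi} \;=\; R_{f\circ\pi} \;=\; R_{\pi\circ[m]} \;=\; [m]^{*} R_{\pi} + R_{[m]}.
$$
Since $\pi$ is quasi\'etale ($R_\pi=0$) and $[m]$ is étale on the abelian variety ($R_{[m]}=0$), this reduces to $\pi^{*} R_f = 0$. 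As $\pi$ is finite and surjective, $\pi^{*}$ is injective on effective divisors, so $R_f = 0$, i.e., $f$ is quasi\'etale.

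I do not anticipate a real obstacle in this implication: all the nontrivial input has already been packaged into Proposition \ref{prop:pol}, where the translation trick of Lemma \ref{lem:trans} and Corollary \ref{cor:conj} was used to arrange that some multiplication map $[m]$ genuinely commutes with the $G$-action. The ramification-formula verification that the descended $f$ is quasi\'etale is then only a short bookkeeping step resting on the facts that $\pi$ is quasi\'etale by hypothesis and that $[m]$ on an abelian variety is étale.
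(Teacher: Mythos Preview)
Your proposal is correct and follows essentially the same route as the paper: for $(1)\Rightarrow(2)$ the paper also descends the multiplication map via Proposition~\ref{prop:pol} and observes that $f$ is quasi\'etale because both $\pi$ and $[m]$ are (your ramification-formula computation just makes this one-line remark explicit). The only place the paper adds a detail you elide is in $(3)\Rightarrow(1)$: before invoking \cite[Theorem 5.2]{Men20} it uses the quasi\'etale hypothesis to get $K_X\sim f^*K_X$ and then the eigenvalue criterion \cite[Theorem 1.1]{Men20} to force $K_X\equiv 0$, which is the actual input needed for Meng's theorem.
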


\begin{proof}
Assume (1).
Take a finite quasi\'etale Galois cover $\pi: A \to X$ by an abelian variety $A$.
Then Proposition \ref{prop:pol} implies that a polarized endomorphism $g$ on $A$ descends to a polarized endomorphism $f$ on $X$.
Now $\pi$ and $g$ are quasi\'etale, so $f$ is also quasi\'etale.

Clearly (2) implies (3) since any polarized endomorphism is int-amplified.

Assume (3).
Let $f$ be a quasi\'etale int-amplified endomorphism on $X$.
Then we have $K_X \sim f^* K_X$.
If $K_X \not\equiv 0$, then the numerical class of $K_X$ in $N^1(X)$ is an eigenvector of $f^*$ with eigenvalue 1, which contradicts that $f$ is int-amplified (cf.~\cite[Theorem 1.1]{Men20}).
So $K_X \equiv 0$.
Then (1) follows from \cite[Theorem 5.2]{Men20}.
\end{proof}

On the other hand, there exists a smooth projective variety which admits a (non-quasi\'etale) int-amplified endomorphism but does not admit a polarized endomorphism.
The following example is told to the author by Y.~Matsuzawa.

\begin{example}
Let $E$ be an elliptic curve and $L$ a line bundle on $E$ such that  $\deg L=0$ and $L \not\sim_{\bQ} 0$.
Take the projective bundle $X=\bP(\mathcal O_E \oplus L)$ over $E$.
Then $X$ admits an int-amplified endomorphism (cf.~\cite[Section 7]{MY21}).
But $X$ does not admit a polarized endomorphism, according to a classification of surfaces admitting polarized endomorphisms due to S.-W. Zhang \cite[Proposition 2.3.1]{SWZ06}.
\end{example}

\section{$\bQ$-Fano finite quotients of abelian varieties and a structure theorem}\label{sec:fano}

Koll\'ar--Larsen proved that any non-quasi\'etale quotient of an abelian variety is uniruled:

\begin{lem}[{\cite[Theorem 10]{KL09}}]\label{lem:uniruled}
Let $X$ be a non-quasi\'etale finite quotient of an abelian variety.
Then $\kappa(X)=-\infty$ and $X$ is uniruled.
\end{lem}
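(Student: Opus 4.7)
The strategy is to construct a finite surjective morphism $B \times \bP^1 \to X$ from the product of an abelian variety and a rational curve. Once this is done, uniruledness of $X$ is immediate, and $\kappa(X) = -\infty$ follows from the pullback of pluricanonical sections: for any $m > 0$ with $mK_X$ Cartier, the pullback embeds $H^0(X, mK_X)$ into $H^0(B \times \bP^1, mK_{B \times \bP^1}) = H^0(B, mK_B) \otimes H^0(\bP^1, mK_{\bP^1}) = 0$.

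Write $X = A/G$ with $G < \Aut(A)$ finite. The non-quasi\'etale hypothesis supplies some $g \in G$ with a fixed divisor $D \subset A$. Decomposing $g = t_a \circ \phi$ into translation and holonomy parts, $\phi$ must act on $T_0 A$ as a pseudoreflection: the identity on a hyperplane, with one remaining eigenvalue a root of unity $\zeta \neq 1$. Let $B$ denote the identity component of $\Ker(\phi - \id_A)$, an abelian subvariety of codimension one, and apply Lemma~\ref{lem:poincare} to $\langle \phi \rangle$ to obtain a $\phi$-invariant abelian subvariety $C \subset A$ of dimension one such that the addition map $\mu \colon B \times C \to A$ is an isogeny.

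The crucial observation is that $a \in C$, viewed inside $A$ via $\mu$: since $g$ has a fixed point on $D$, we have $a \in \Img(\phi - \id_A)$, and since $\phi|_B = \id_B$ while $\phi|_C - \id_C$ is an isogeny on the elliptic curve $C$ (its differential is $\zeta - 1 \neq 0$), this image coincides with $C$. Consequently $g$ lifts to $\tilde g \colon B \times C \to B \times C$, $(b, c) \mapsto (b,\, \phi|_C(c) + a)$, satisfying $\mu \circ \tilde g = g \circ \mu$ and acting trivially on $B$. The induced automorphism $h := t_a \circ \phi|_C$ of $C$ is of finite order (its $e$-th power, where $e = \ord(\phi|_C)$, is translation by $(\id + \phi|_C + \cdots + \phi|_C^{e-1})(a)$, which vanishes because $1 + \zeta + \cdots + \zeta^{e-1} = 0$ forces this endomorphism of $C$ to be zero) and has a fixed point (by surjectivity of $\phi|_C - \id_C$); after translating the fixed point to the origin it becomes a nontrivial element of $\Aut_{\grp}(C)$, so $C / \langle h \rangle = \bP^1$ by Riemann--Hurwitz. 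Therefore $(B \times C) / \langle \tilde g \rangle = B \times \bP^1$, and composing the resulting finite dominant map $B \times \bP^1 \to A / \langle g \rangle$ with the quotient $A / \langle g \rangle \to A/G = X$ yields the required morphism. The main obstacle is securing the containment $a \in C$; this is precisely what lets $\tilde g$ respect the product decomposition and act trivially on $B$, without which the $\bP^1$-ruling would be obscured.
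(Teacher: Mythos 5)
Your argument is correct. Note, though, that the paper does not prove this statement at all: it is quoted as \cite[Theorem 10]{KL09}, where Koll\'ar--Larsen establish a more general result (for finite quotients of canonical varieties with numerically trivial canonical class) via the Reid--Tai condition. What you have written is a direct, self-contained proof specific to abelian varieties, and every step checks out: the non-quasi\'etale hypothesis gives $g=t_a\circ\phi$ with $\dim\Fix(g)=n-1$, so $\Ker(\id-\phi)^\circ=B$ has codimension one and $d\phi$ is a pseudoreflection (semisimple because $\phi$ has finite order, and the last eigenvalue $\zeta\neq 1$ since otherwise $\phi=\id$ and a nontrivial translation has no fixed points); Lemma~\ref{lem:poincare} supplies the complementary elliptic curve $C$; the containment $a\in\Img(\id-\phi)=(\id-\phi)(C)=C$ (using $A=B+C$, $(\id-\phi)|_B=0$, and surjectivity of $(\id-\phi)|_C$) is exactly what makes the lift $\tilde g=\id_B\times h$ respect the product; and the order and fixed-point computations for $h$ correctly yield $C/\langle h\rangle\cong\bP^1$, hence a finite surjective $B\times\bP^1\to A/\langle g\rangle\to X$, from which uniruledness and $\kappa(X)=-\infty$ follow. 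Your construction is in fact very close in spirit to the paper's own proof of Theorem~\ref{thm:decomp} (splitting off the fixed locus of the divisorially-ramified elements via Lemma~\ref{lem:poincare}), so it fits the paper's toolkit better than the imported citation does; the only cost relative to \cite{KL09} is that your argument does not generalize beyond abelian covers. Two very minor points you may wish to make explicit: that $d\phi$ acts on $T_0C$ by $\zeta$ (else $d\phi=\id$), and that $\kappa(X)$, defined via a resolution, is bounded above by $\kappa(X,K_X)$ for the klt variety $X$ (or simply deduce $\kappa=-\infty$ from uniruledness of a resolution).
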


The following proposition tells us when an fqav is $\bQ$-Fano.

\begin{lem}\label{lem:-k}
Let $A$ be an abelian variety of dimension $n$,
$G < \Aut(A)$ a finite group,
and $\pi:A \to X$ the quotient map by $G$.
\begin{itemize}
\item[(1)]
The reduced ramification divisor $\tilde{R}_{\pi}$ is of the form 
$$\tilde{R}_{\pi}=\bigcup_{i=1}^r (A_i+a_i ),$$ 
where $A_i \subset A$ is an abelian subvariety of codimension 1 and $a_i \in A$.
\item[(2)]
We have 
$$\kappa(-K_X)=n- \dim (\bigcap_{i=1}^r A_i).$$
Moreover, if $\kappa(-K_X)=n$, then $X$ is a $\bQ$-Fano variety and $A$ is isogenous to a product of elliptic curves.
\end{itemize}
\end{lem}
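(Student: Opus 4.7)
The overall plan is to express the ramification divisor $R_\pi$ on $A$ as a sum of torsion translates of codimension-one abelian subvarieties, and then invoke Lemma \ref{lem:kappa}(2). For part (1), I would write each $g \in G$ as $g = t_{a_g} \circ \phi_g$ with $\phi_g \in \Aut_{\grp}(A)$ the holonomy part, observe that $\Fix(g) = \{x \in A : (\phi_g - \id_A)(x) = -a_g\}$ is either empty or a coset of $\Ker(\phi_g - \id_A)$, and recall that the identity component of this kernel is an abelian subvariety of $A$. Since $\tilde R_\pi$ is the union of the codimension-one components of $\bigcup_{g \neq \id}\Fix(g)$, each prime component is a translate of a codimension-one abelian subvariety, yielding the decomposition in (1).

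For part (2) the main preliminary step is to arrange that each $a_i$ lies in $A_{\tor}$, so that Lemma \ref{lem:kappa}(2) becomes applicable. I would first use Corollary \ref{cor:conj} to replace $G$ by a conjugate $G'$ with $g(0) \in A_{\tor}$ for every $g \in G'$; this gives an isomorphic quotient (hence the same $\kappa(-K_X)$), preserves the holonomy parts $\phi_g$, and therefore preserves the abelian subvariety directions $A_i$ while uniformly translating the components. Next, for each $g = t_a \circ \phi \in G'$ with $a \in A_{\tor}$ and $\Fix(g) \neq \emptyset$, I would argue that every component of $\Fix(g)$ contains a torsion point: setting $B = \Ker(\phi - \id_A)^\circ$, the map $\phi - \id_A$ descends to an isogeny from $A/B$ onto its image and sends the class of any fixed point $x_0$ to $-a$, which is torsion; lifting this class through the surjection $A_{\tor} \twoheadrightarrow (A/B)_{\tor}$, available by divisibility of abelian varieties, produces $t \in A_{\tor}$ with $t - x_0 \in B$, so $t \in \Fix(g)$.

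With the decomposition $R_\pi = \sum_i m_i R_i$ into torsion translates in hand, the ramification formula $K_A \sim \pi^*(K_X + \Delta)$ from Lemma \ref{lem:klt} combined with $K_A \sim 0$ gives $\pi^*(-K_X) \sim R_\pi$; finiteness of $\pi$ and Lemma \ref{lem:kappa}(2) then yield $\kappa(-K_X) = \kappa(R_\pi) = n - \dim \bigcap_i A_i$. For the moreover clause, when $\kappa(-K_X) = n$ the second half of Lemma \ref{lem:kappa}(2) gives both ampleness of $R_\pi$ and the elliptic-product structure on $A$; ampleness of $\pi^*(-K_X)$ descends to ampleness of $-K_X$ under the finite map $\pi$, and the klt property of $X$ from Lemma \ref{lem:klt} completes the identification of $X$ as a $\bQ$-Fano variety. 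The hard step will be the torsion-translate reduction in the second paragraph: the abelian subvariety directions $A_i$ are intrinsic to the holonomy parts of $G$, but the translate positions $a_i$ are not, so one must combine the conjugation freedom from Corollary \ref{cor:conj} with the divisibility of $A$ to force every $a_i$ into $A_{\tor}$ compatibly with the codimension-one fixed structure—neither tool alone suffices.
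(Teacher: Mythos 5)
Your proposal is correct and follows essentially the same route as the paper: fixed loci of $g=t_a\circ\phi$ are cosets of $\Ker(1-\phi)$ for part (1), then conjugating $G$ via Corollary \ref{cor:conj} to make the translates torsion, applying the ramification formula to get $\pi^*(-K_X)\sim R_\pi$, and concluding with Lemma \ref{lem:kappa}(2). Your second paragraph (lifting torsion through the surjection $\phi-\id_A$ onto its image to place a torsion point on each component of $\Fix(g)$) carefully justifies a step the paper asserts without comment, and the argument is sound.
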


\begin{proof}
The ramification locus of $\pi$ is the union of the zero locus of $1-g$ for $g \in G \setminus \{ 1 \}$.
Fix $g \in G$ and write $g=t_a \circ \phi$ where $\phi$ is the holonomy part of $g$.
Then $$(1-g)^{-1}(0)=(1-\phi)^{-1}(a),$$
which is a finite union of translated abelian subvarieties if non-empty.
So the ramification locus of $\pi$ is a finite union of translated abelian subvarieties.
In particular, $\tilde{R}_\pi$ is a finite union of translated abelian subvarieties of codimension 1.

Replacing $G$ by its conjugate by applying Corollary \ref{cor:conj}, 
we may assume that 
$$\tilde{R}_{\pi}=\bigcup_{i=1}^r (A_i+a_i )$$ 
where $A_i \subset A$ is an abelian subvariety of codimension 1 and  $a_i \in A_{\tor}$ for all $i$.
By the ramification formula, $0 \sim \pi^*K_X+R_\pi$.
So $\kappa(-K_X)=\kappa(R_{\pi})$, and $-K_X$ is ample if and only if $R_{\pi}$ is.
Hence the assertion follows from Lemma \ref{lem:kappa}.
\end{proof}

%

Let us prove our main theorem:

\begin{thm}\label{thm:decomp}
Let $X$ be an fqav.
Then there exists a finite quasi\'etale cover $\theta: B \times F \to X$ such that 
$B$ is an abelian variety and $F$ is a $\bQ$-Fano fqav.
\end{thm}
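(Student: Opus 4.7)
The plan is to construct the cover by explicitly decomposing $A$, up to isogeny, into an ``abelian part'' and a ``Fano part,'' passing to an appropriate intermediate quotient, and then verifying the $\bQ$-Fano property via Lemma~\ref{lem:-k} together with a quasi\'etaleness check.

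First I would write $X = A/G$ with $A$ abelian and, by Corollary~\ref{cor:conj}, arrange $g(0) \in A_{\tor}$ for every $g \in G$. Let $G_0$ denote the holonomy part of $G$, and define $B \subset A$ as the identity component of $\bigcap_\phi \Fix(\phi)$, where $\phi$ ranges over the pseudo-reflections in $G_0$ (i.e.\ $\rank(1-\phi)=1$) that arise as the holonomy of some $g \in G$ with non-empty codimension-one fixed locus on $A$. This set of $\phi$ is closed under $G_0$-conjugation, so $B$ is a $G_0$-invariant abelian subvariety. Choose a $G_0$-invariant Poincar\'e complement $C \subset A$ by Lemma~\ref{lem:poincare}, giving an isogeny $\mu : B \times C \to A$, and lift the $G$-action to a finite $\tilde G \subset \Aut(B \times C)$ containing $\ker\mu$ with $(B \times C)/\tilde G = X$. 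Choosing torsion preimages $(b_g, c_g) \in (B \times C)_{\tor}$ of each $a_g$, every $\tilde g \in \tilde G$ acts as $(b,c) \mapsto (\phi_g|_B(b) + b_g,\; \phi_g|_C(c) + c_g)$.

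The candidate cover is produced from the normal subgroup
\[
H' := \{\tilde g \in \tilde G : \phi_g|_B = \id_B \text{ and } b_g = 0\}
\]
of elements acting trivially on the $B$-factor: setting $F := C/H'_C$ (where $H'_C \subset \Aut(C)$ is the image of $H'$) gives $(B \times C)/H' = B \times F$ and a natural map $\theta : B \times F \to X$. To see that $F$ is $\bQ$-Fano I would note that the codimension-one ramification components of $C \to F$ correspond to elements $h \in H'$ whose fixed locus on $A$ has codimension one: the fixed-locus condition automatically forces $b_h = 0$ and $\phi_h|_B = \id_B$, and $\phi_h|_C$ is a pseudo-reflection on $C$. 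By the very definition of $B$, the intersection of the kernels $\Ker(1 - \phi_h) \subset T_0 A$ over all such $h$ equals $T_0 B$; intersecting with $T_0 C$ therefore gives zero. Lemma~\ref{lem:-k}(2) then yields $\kappa(-K_F) = \dim F$, so $-K_F$ is ample and $F$ is a $\bQ$-Fano fqav.

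The hard part will be verifying that $\theta$ is quasi\'etale, i.e.\ that every non-identity $\bar g \in \tilde G/H'$ has fixed locus of codimension $\ge 2$ on $B \times F$. Writing $\Fix(\bar g) = \Fix(g|_B) \times \Fix(\bar\sigma_g)$, an elementary case analysis on $\phi_g|_B$ (identity with nonzero translation; rank $\ge 2$; empty fixed locus on $B$) disposes of everything except the borderline case where $\phi_g|_B$ is a pseudo-reflection on $B$ and $b_g \in \Img(1 - \phi_g|_B)$, so that $\Fix(g|_B)$ has codimension one on $B$. In that case one must prove $\bar\sigma_g \ne \id_F$, which I would do by contradiction: if $\sigma_g \in H'_C$, pick $h \in H'$ with $\sigma_h = \sigma_g$; a direct computation using $\phi_g|_C = \phi_h|_C$ and $c_g = c_h$ shows that $gh^{-1}$ acts purely on the $B$-factor with holonomy $\phi_g|_B$ and has codimension-one fixed locus on $A$. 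But then $\phi_g|_B$ would be a realized pseudo-reflection, which by the definition of $B$ would force $B$ to embed into $\Fix(\phi_g|_B)^{0}$, a proper abelian subvariety of $B$ --- contradiction. Reconciling the affine structures on $B$ and $C$ via the torsion normalization in this borderline case is the most delicate step.
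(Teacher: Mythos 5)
Your argument is correct in outline but takes a genuinely different route from the paper's. The paper first passes to the intermediate quotient $Y=A/N$, where $N$ is the subgroup generated by the elements of $G$ whose fixed locus has codimension one; it shows $Y\to X$ is quasi\'etale because every element outside $N$ has small fixed locus, identifies $Y$ with $(B\times C)/\tilde N$ via the isogeny $\mu$, peels off the $B$-factor by a base-change argument (so the second quasi\'etaleness check is free, being a base change of the \'etale cover $B\to B/\tilde N_B$), and then \emph{recurses} on $C/\tilde N_C$ if it is not yet $\bQ$-Fano. You instead quotient $B\times C$ in one step by the full normal subgroup $H'$ of elements acting trivially on the $B$-factor and verify both the $\bQ$-Fano property of $F$ and the quasi\'etaleness of $\theta$ directly. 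What your approach buys is the elimination of the recursion: since $H'$ contains \emph{every} element of $\tilde G$ with divisorial fixed locus (not just a generating set), Lemma~\ref{lem:-k}(2) applies at once. What it costs is that quasi\'etaleness of $\theta$ must be checked for the whole group $\tilde G/H'$; your borderline case is resolved by essentially the same mechanism as the paper's proof that $A/N\to X$ is quasi\'etale, namely that anything with a divisorial fixed locus has already been absorbed into the distinguished normal subgroup.

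Two points should be made explicit. First, for the identity $\bigcap_h\Ker(1-\phi_h)=T_0B$ you need that \emph{every} pseudo-reflection $\phi$ entering the definition of $B$ is realized by some $h\in H'$ whose fixed locus on $B\times C$ actually has codimension one; you prove the implication ``codimension-one fixed locus $\Rightarrow h\in H'$'' but not the existence of such an $h$ for each $\phi$. This follows because $\mu^{-1}(\Fix(g))=\bigcup_{k\in\Ker\mu}\Fix(t_{-k}\circ\tilde g)$ is nonempty, so at least one lift of $g$ to $\tilde G$ has a nonempty (hence codimension-one) fixed locus, and that lift lies in $H'$ by your own observation. Second, in the borderline case you should note that $gh^{-1}$ has nontrivial holonomy, hence nontrivial image $\gamma$ in $G$ with $\dim\Fix(\gamma)=n-1$, before concluding that $\phi_\gamma$ is a realized pseudo-reflection. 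With these additions the proof is complete.
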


\begin{proof}
Set $n=\dim A$ and let $\{ g_{\lambda} \}_{\lambda \in \Lambda} \subset G$ be the elements of $G$ satisfying $\dim \Fix(g_\lambda)=n-1$.
Let $\phi_\lambda$ be the holonomy part of $g_\lambda$.
Clearly $\Fix(hgh^{-1})=h(\Fix(g))$ for any $g,h \in G$,
so the subgroup $N < G$ generated by $\{ g_{\lambda} \}_{\lambda \in \Lambda}$ is normal.


Decompose $\pi$ as $A \overset{\pi_1}{\to} Y=A/N \overset{\pi_2}{\to} X$.
We show that $\pi_2$ is quasi\'etale.
Take any $g \in G \setminus N$ and $y \in Y$.
Take $a \in A$ such that $\pi_1(a)=y$.
Then we have 
\begin{align*}
g \cdot y=y &\Leftrightarrow \pi_1(g \cdot a)=\pi_1(a) \\
&\Leftrightarrow \exists h \in N,\ h \cdot g \cdot a=a \\
&\Leftrightarrow a \in \bigcup_{h \in N} \Fix(hg).
\end{align*}
So $$\Fix(g|_Y)=\pi_1(\bigcup_{h \in N} \Fix(hg)).$$
For any $h \in N$, $hg \not\in N$ and so $\dim \Fix(hg) \le n-2$.
Hence $\dim \Fix(g|_Y) \le n-2$ for any $g \in G \setminus N$.
This means that $\pi_2$ is quasi\'etale.

Let $N_0$ be the holonomy part of $N$ and let $B \subset A$ be the connected component of 
$$A^{N_0}=\bigcap_{\phi \in N_0}\Fix(\phi)=\bigcap_{\lambda \in \Lambda}\Fix(\phi_\lambda)$$ containing the origin.
If $B=0$, then $A/N$ is $\bQ$-Fano by Lemma \ref{lem:-k}.
If $B=A$, then $N$ is a group of translations and so $A/N$ is an abelian variety.
Hence we may assume that $0 < \dim B < \dim A$.
It follows from Lemma \ref{lem:poincare} that there exists an $N_0$-invariant abelian subvariety $C \subset A$ such that the addition map $\mu: B \times C \to A$ is an isogeny.
Set 
$$\tilde{N}=\{ t_b \times (t_c \circ \phi|_C) \in \Aut(B \times C) \mid \phi \in N_0,\ t_{b+c} \circ \phi \in N \}.$$
Then we obtain the following diagram:
$$
\xymatrix@C=40pt{
B \times C \ar[r]^{\tilde{q}} \ar[d]_{\mu} & (B \times C) / \tilde{N} \ar[d]^{\overline{\mu}} \\
A \ar[r]^q & A/N
}
$$
Here the horizontal morphisms are the quotient maps and $\overline{\mu}$ is the induced morphism.

Take $(y,z), (y',z') \in B \times C$ such that $q(y+z)=q(y'+z')$.
Then there exists $g \in N$ such that $g(y+z)=y'+z'$.
Write $g=t_a \circ \phi$ with $\phi \in N_0$ and $a=b_0+c_0$ with $b_0 \in B$, $c_0 \in C$.
Then 
$$y'+z'=y+\phi(z)+b_0+c_0.$$
Setting 
$$d=y+b_0-y' =z'-(\phi(z)+c_0) \in B \cap C,$$
we have $t_{b_0-d} \times (t_{c_0+d} \circ \phi|_C) \in \tilde{N}$ and 
$$(t_{b_0-d} \times (t_{c_0+d} \circ \phi|_C))(y,z)=(y+b_0-d, \phi(z)+c_0+d)=(y',z').$$
Thus $\overline{\mu}$ is bijective, so it is an isomorphism.

Set 
$$\tilde{N}_B=\{ t_b \in \Aut(B) \mid t_b \times h \in N \ \mathrm{for\ some\ }h \in \Aut(C) \}.$$
Then the projection $p:B \times C \to B$ descends to $\overline{p}:(B \times C)/\tilde{N} \to B/\tilde{N}_B$.
We have the following diagram:

$$
\xymatrix@C=40pt{
B \times C \ar@(d, ul)[ddr]_p \ar@(r, ul)[drr]^{\tilde{q}} \ar[dr]^{\nu} 
& & \\
 & Z \ar[r]^{q_Z} \ar[d]^{p_Z} & (B \times C) / \tilde{N} \ar[d]^{\overline{p}}\\
 & B \ar[r]^{q_B}  & B/\tilde{N}_B
}
$$
Here $q_B$ is the quotient map,
$p_Z$ is the base change of $\overline{p}$ along $q_B$ and $\nu$ is the induced morphism by the universal property.
Now $\nu$ is a factor of $\tilde{q}$, so $Z=(B \times C)/L$ for some 
subgroup $L < \tilde{N}$.
Any element of $L$ is of the form $\id_B \times h$ since $\nu$ factors through $p$.
So, setting $\tilde{N}_C=\{ g |_C \mid g \in N,\ g(C)=C \}$.
$L= \{ 1 \} \times \tilde{N}_C$ by the minimality of $Z$.
Hence $Z=B \times (C/\tilde{N}_C)$.
Note that $q_Z$ is quasi\'etale since it is the base change of the \'etale cover $q_B$.

As a result, we obtain a finite quasi\'etale cover 
$$Z=B \times (C/\tilde{N}_C)  \overset{q_Z}{\to} (B \times C)/\tilde{N} \cong A/N \overset{\pi_2}{\to} X.$$
If $C/\tilde{N}_C$ is not $\bQ$-Fano,
then do the same process to $C/\tilde{N}_C$.
Continuing this process, the assertion follows.
\end{proof}

As a corollary, we give a characterization of $\bQ$-Fano fqav's:

\begin{cor}\label{cor:fano}
Let $X$ be an fqav.
Then the following are equivalent:
\begin{itemize}
\item[(1)] $q^\circ(X)=0$.
\item[(2)] $X$ is $\bQ$-Fano.
\item[(3)] $X$ is of Fano type.
\end{itemize}
\end{cor}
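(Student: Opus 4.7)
The plan is to prove the cyclic implications $(2) \Rightarrow (3) \Rightarrow (1) \Rightarrow (2)$; the first two are essentially formal, while the third is where the structure theorem does the real work.

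The implication $(2) \Rightarrow (3)$ is immediate: if $X$ is $\bQ$-Fano then $(X, 0)$ is itself a klt log Fano pair, so $X$ is of Fano type by definition.

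For $(3) \Rightarrow (1)$, I would show that Fano type is preserved under finite quasi\'etale covers. Given a boundary $\Delta$ making $(X, \Delta)$ klt log Fano and any finite quasi\'etale cover $\pi: Y \to X$, the pair $(Y, \pi^*\Delta)$ is again klt log Fano: $K_Y + \pi^*\Delta = \pi^*(K_X + \Delta)$ has anti-ample class (pullback of ample by a finite surjective morphism is ample), and klt descends along quasi\'etale covers via the trivial ramification formula. Kawamata--Viehweg vanishing applied to $(Y, \pi^*\Delta)$ then gives $H^i(Y, \mathcal{O}_Y) = 0$ for $i \geq 1$, so $q(Y) = 0$. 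Since $Y$ is arbitrary, $q^\circ(X) = 0$.

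The main step is $(1) \Rightarrow (2)$, which uses Theorem \ref{thm:decomp}: there is a finite quasi\'etale cover $\theta : B \times F \to X$ with $B$ an abelian variety and $F$ a $\bQ$-Fano fqav. By the K\"unneth formula,
\[ q(B \times F) = q(B) + q(F) = \dim B + 0 = \dim B, \]
where $q(F) = 0$ comes from Kawamata--Viehweg vanishing on the klt Fano $F$. Since $B \times F$ is a finite quasi\'etale cover of $X$, the hypothesis $q^\circ(X) = 0$ forces $\dim B = 0$, so $\theta : F \to X$ is a finite quasi\'etale cover from a $\bQ$-Fano variety. To conclude that $X$ itself is $\bQ$-Fano I would replace $F$ by a Galois closure over $X$ (still quasi\'etale and still $\bQ$-Fano by the same pullback argument as in the previous implication), so that $X$ is a quotient of a klt Fano by a finite group acting freely in codimension one. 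Standard descent then gives $X$ klt, and $\theta^*(-K_X) = -K_F$ ample implies $-K_X$ ample by the finite-cover criterion for ampleness.

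I expect no serious obstacle; the only step needing a little care is the final descent of klt and ampleness from $F$ to $X$, both standard once one passes to a Galois closure.
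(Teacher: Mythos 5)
Your proof is correct and follows essentially the same route as the paper: the decomposition theorem $\theta:B\times F\to X$ forces $\dim B=0$ for $(1)\Rightarrow(2)$ and then descends ampleness of $-K_F$, while pulling back the log Fano boundary along an arbitrary finite quasi\'etale cover gives $(3)\Rightarrow(1)$. The only cosmetic difference is that you conclude $q(Y)=0$ via Kawamata--Viehweg vanishing, whereas the paper invokes rational connectedness of log Fano varieties (Zhang, \cite{QZ06}); both are standard.
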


\begin{proof}
Note that $X$ is klt due to Lemma \ref{lem:klt}.
Let $\theta: Y= B \times F \to X$ a finite quasi\'etale cover obtained in Theorem \ref{thm:decomp} and $\pr_1: Y \to B$, $\pr_2: Y \to F$ the canonical projections.
Then $K_X \sim \theta^*K_Y \sim \theta^* \pr_2^*K_F$ by the ramification formula.

If $q^\circ(X)=0$,
then $B=0$ since $\pr_1: Y \to B$ is the Albanese morphism of $Y$.
This implies that $-K_X \sim -\theta^*K_F$ is ample, so $X$ is $\bQ$-Fano.

Clearly (2) implies (3),
so we show that (3) implies (1).
Let $\phi: Y \to X$ be a finite quasi\'etale cover by a normal projective variety $Y$.
Take a boundary divisor $\Delta$ on $X$ such that $(X, \Delta)$ is log Fano.
Then $(Y, \pi^*\Delta)$ is also log Fano since $K_Y + \pi^*\Delta \sim \pi^*(K_X+\Delta)$.
So $q(Y)=0$ since $Y$ is rationally connected (cf.~\cite{QZ06}).
Therefore $q^\circ(X)=0$.
\end{proof}

For a general fqav, we have the following.

\begin{prop}\label{prop:q}
Let $X$ be an fqav.
Take a finite quasi\'etale cover $\theta: B \times F \to X$ such that $B$ is an abelian variety and $F$ is a $\bQ$-Fano fqav.
Then $q^\circ(X)=\dim B$.
\end{prop}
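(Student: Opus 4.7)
The plan is to establish two inequalities: $q^\circ(X) \geq \dim B$ and $q^\circ(X) \leq \dim B$.

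For the lower bound, observe that $\theta$ itself is a finite quasi\'etale cover, so $q^\circ(X) \geq q(B \times F) = q(B) + q(F) = \dim B$. The vanishing $q(F) = 0$ follows because $F$ is $\bQ$-Fano, hence of Fano type by Corollary \ref{cor:fano}, and varieties of Fano type are rationally connected by \cite{QZ06}.

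For the upper bound, let $\phi \colon Y \to X$ be an arbitrary finite quasi\'etale cover with $Y$ normal projective, and let $W$ be an irreducible component of the normalization of the fiber product $Y \times_X (B \times F)$. A standard base-change argument shows that the induced maps $\varphi \colon W \to Y$ and $\psi \colon W \to B \times F$ are both finite quasi\'etale surjective. Since $\varphi$ is finite surjective, the induced morphism $\Alb(W) \to \Alb(Y)$ is surjective (the image is an abelian subvariety containing the image of $W$, which generates $\Alb(Y)$; note $Y$ has surjective Albanese by Theorem \ref{thm:surj}). Hence $q(Y) \leq q(W)$, so it suffices to prove $q(W) = \dim B$.

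To compute $q(W)$, apply the ramification formula to the quasi\'etale $\psi$: we have $K_W \sim \psi^* K_{B \times F} = \psi^* \pr_2^* K_F$, so $-K_W$ is nef (as $-K_F$ is ample). Combined with $W$ being klt, Theorem \ref{thm:surj} shows the Albanese map $a \colon W \to \Alb(W)$ is surjective with connected fibers. The surjective composition $\pr_1 \circ \psi \colon W \to B$ factors through $a$, producing a surjection $\alpha \colon \Alb(W) \to B$. To conclude, I would show $\alpha$ is an isogeny as follows: for each $b \in B$, the fiber $(\pr_1 \circ \psi)^{-1}(b)$ is a disjoint union of its connected components, each of which is a finite quasi\'etale cover of $\{b\} \times F \cong F$, and therefore is itself a $\bQ$-Fano variety (by ramification), hence rationally connected. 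Since any morphism from a rationally connected variety to an abelian variety is constant, each such component is mapped to a single point of $\Alb(W)$. Conversely, any fiber of $a$ is irreducible and is sent by $\alpha$ to a single point of $B$, so it lies inside one such rationally connected component. Dimension comparison then forces the two partitions of $W$ (into fibers of $a$ versus connected components of fibers of $\pr_1 \circ \psi$) to coincide, giving $\dim a^{-1}(a) = \dim F$ generically, whence $\dim \Alb(W) = \dim W - \dim F = \dim B$.

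The principal technical hurdle is the fiber-identification step: one must carefully combine the connected-fiber property of the Albanese morphism with the constancy of morphisms from rationally connected varieties to abelian varieties to match fibers of $a$ with connected components of fibers of $W \to B$. A secondary issue is verifying that quasi\'etaleness descends to the normalized irreducible components of the fiber product defining $W$, and that the Albanese functoriality is robust enough to treat the possibly singular (but klt) $W$.
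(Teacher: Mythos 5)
Your proof is correct and takes essentially the same route as the paper: both pass to a quasi\'etale cover $W$ dominating $B \times F$ and use that the components of its fibres over $B$ are quasi\'etale covers of $F$, hence $\bQ$-Fano and rationally connected, to force $\dim \Alb(W) = \dim B$. The only refinement needed is to run the fibre argument over a \emph{general} $b \in B$ (so the fibre is normal and the restriction of $\psi$ remains quasi\'etale); this already yields $\dim \Alb(W) \le \dim W - \dim F$ by a dimension count, making the full fibre-matching you describe unnecessary.
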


\begin{proof}
Take a finite quasi\'etale cover $\phi: Z \to X$ with $Z$ normal such that $q(Z)=q^\circ(X)$.
Taking $\phi$ sufficiently largely,
we may assume that $\phi$ factors as $\phi=\theta \circ \varepsilon$ for some $\varepsilon: Z \to B \times F$.
Then we have the following diagram.
$$
\xymatrix@C=40pt{
Z \ar[r]^\varepsilon \ar[d]_{p'} & B \times F \ar[r]^{\theta} \ar[d]_{\pr_1} & X  \\
B' \ar[r]^{\phi}  & B  & 
}
$$
Here $p': Z \to B'$ is the Albanese morphism of $Z$ and $\phi: B' \to B$ is the induced morphism.

Suppose $\dim B' > \dim B$.
Let $\{ b \} \times F$ be a general fiber of $\pr_1$.
Now $\dim \phi^{-1}(b) >0$ by assumption.
So an irreducible component $W$ of $\varepsilon^{-1}(\{b\} \times F)$ has positive irregularity.
But $\varepsilon|_W: W \to \{b\} \times F$ is a finite quasi\'etale cover, so $W$ is also $\bQ$-Fano.
This is a contradiction since the irregularity of any $\bQ$-Fano variety is zero.
\end{proof}

Proposition \ref{prop:q} implies the following characterization of Q-abelian varieties in terms of $q^\circ(X)$.

\begin{cor}\label{cor:charofq-ab}
Let $X$ be an fqav.
Then $X$ is a Q-abelian variety if and only if $q^\circ(X)=\dim X$.
\end{cor}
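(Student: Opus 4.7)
The plan is to prove both directions by reducing to Proposition \ref{prop:q} and Theorem \ref{thm:decomp}, together with the earlier lemma bounding $q^\circ(X)$ above by $\dim X$.

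For the forward direction, suppose $X$ is Q-abelian. By definition, there is a finite quasi\'etale cover $\pi: A \to X$ with $A$ an abelian variety. Since $\pi$ is quasi\'etale and $\dim A = \dim X$, we have $q(A) = \dim A = \dim X$, so $q^\circ(X) \ge \dim X$. Combined with the earlier lemma showing $q^\circ(X) \le \dim X$ (valid since $X$ is klt with $-K_X$ nef by Lemmas \ref{lem:klt} and \ref{lem:nef}), this gives $q^\circ(X) = \dim X$.

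For the reverse direction, assume $q^\circ(X) = \dim X$. Apply Theorem \ref{thm:decomp} to obtain a finite quasi\'etale cover $\theta: B \times F \to X$ with $B$ an abelian variety and $F$ a $\bQ$-Fano fqav. By Proposition \ref{prop:q}, $\dim B = q^\circ(X) = \dim X = \dim(B \times F)$. Hence $\dim F = 0$, so $F$ is a point, and $\theta: B \to X$ is a finite quasi\'etale cover from an abelian variety. By definition, $X$ is Q-abelian.

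There is essentially no obstacle here: both directions follow immediately by combining the stated results with the dimension bound on $q^\circ$. The only minor point to double-check is that in the forward direction the irregularity $q(A)$ of the abelian variety $A$ covering $X$ genuinely equals $\dim A$, which is standard. The whole argument fits in a few lines and should be presented as a short corollary deduction rather than a proof with substantive content.
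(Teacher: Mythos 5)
Your proof is correct and follows essentially the same route as the paper: the reverse direction is identical (Theorem \ref{thm:decomp} plus Proposition \ref{prop:q} forces $\dim F=0$). The only cosmetic difference is in the forward direction, where the paper cites Proposition \ref{prop:q} directly while you combine the trivial lower bound $q^\circ(X)\ge q(A)=\dim A$ with the general upper bound $q^\circ(X)\le\dim X$; both are immediate and equally valid.
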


\begin{proof}
If $X$ is Q-abelian, then $X$ admits a finite quasi\'etale cover from an abelian variety and so $q^\circ(X)=\dim X$ due to Proposition \ref{prop:q}.

Conversely, if $q^\circ(X)=\dim X$,
then the finite quasi\'etale cover $\theta: B \times F \to X$ as in Theorem \ref{thm:decomp} satisfies 
$$\dim B = q^\circ(X)=\dim X$$
due to Proposition \ref{prop:q}.
So $F$ is just a point.
\end{proof}

As another corollary,
we see that any fqav is constructed from  Q-abelian varieties and $\bQ$-Fano fqav's:

\begin{cor}\label{cor:fib}
Let $X$ be an fqav.
Then we have the following diagram:
$$X=X_0 \overset{q_0}{\to} X_1 \overset{q_1}{\to} \cdots \overset{q_{r-1}}{\to} X_r=Y$$
such that 
\begin{itemize}
\item $X_i$ is an fqav for any $i$.
\item For $0 \le i \le r-1$,
$q_i: X_i \to X_{i+1}$ be a surjective morphism with connected fibers and it fits into the following diagram.
$$
\xymatrix@C=40pt{
 B_i \times F_i \ar[r]^{\pr_1} \ar[d]_{\theta_i} & B_i \ar[d]^{\psi_i}  \\
 X_{i} \ar[r]^{q_i}  & X_{i+1}
}
$$
Here $B_i$ is an abelian variety, $F_i$ is a $\bQ$-Fano fqav,
$\pr_1$ is the projection,
$\theta_i$ is a finite quasi\'etale cover,
and $\psi_i$ is a finite cover.
\item $Y$ is a Q-abelian variety (possibly $Y$ is a point).
\end{itemize}
\end{cor}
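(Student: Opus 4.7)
The plan is to proceed by induction on $\dim X$. If $X$ is Q-abelian (in particular if $X$ is a point), take $r=0$ and $Y=X$, so the tower is trivial. Otherwise, it suffices to construct a single step $q_0:X_0 \to X_1$ (with $X_0 := X$) fitting into the required commutative diagram, where $X_1$ is an fqav of strictly smaller dimension; the full tower then follows by applying the inductive hypothesis to $X_1$ and concatenating.

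For the inductive step, I first apply Theorem \ref{thm:decomp} to obtain a finite quasi\'etale cover $\theta_0:B_0 \times F_0 \to X_0$ with $B_0$ abelian and $F_0$ a $\bQ$-Fano fqav; since $X_0$ is not Q-abelian, the construction in the proof of Theorem \ref{thm:decomp} forces $\dim F_0 > 0$. Let $\hat{Z} \to X_0$ be the Galois closure of $\theta_0$, with Galois group $H$, and write $\phi : \hat{Z} \to B_0 \times F_0$ for the intermediate morphism. Because the Galois closure of a quasi\'etale cover is again quasi\'etale (being \'etale over the smooth locus of $X_0$), $\hat{Z}$ is itself an fqav with klt singularities and nef anti-canonical divisor. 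By Theorem \ref{thm:surj} the Albanese map $\alb_{\hat Z}:\hat{Z} \to \Alb(\hat{Z})$ is surjective with connected fibers. Combining Proposition \ref{prop:q} (which gives $q(\hat{Z}) \le q^\circ(X_0) = \dim B_0$) with Albanese functoriality applied to $\phi$ (using $q(F_0)=0$ from Corollary \ref{cor:fano}), one obtains $\dim \Alb(\hat{Z}) = \dim B_0$, so the induced homomorphism $\beta : \Alb(\hat{Z}) \to B_0$ is an isogeny. The group $H$ acts on $\Alb(\hat{Z})$ via functoriality, and $\alb_{\hat Z}$ is $H$-equivariant. Define $X_1 := \Alb(\hat{Z})/H$, which is an fqav of dimension $\dim B_0 < \dim X_0$, and let $q_0 : X_0 = \hat{Z}/H \to X_1$ be the descent of $\alb_{\hat Z}$; it is surjective with connected fibers.

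It remains to produce $\psi_0$ and verify commutativity. I plan to show that $q_0 \circ \theta_0 : B_0 \times F_0 \to X_1$ factors through $\pr_1$. Fix $b \in B_0$, and consider $V_b := \theta_0(\{b\}\times F_0) \subset X_0$ together with its preimage $P := \hat{Z} \times_{X_0} V_b$ in $\hat{Z}$. Every irreducible component $W$ of $P$ is finite over $V_b$, and the identity $\beta \circ \alb_{\hat Z} = \pr_1 \circ \phi$ forces $\alb_{\hat Z}(W)$ into a single fiber of $\beta$, which is a finite set; by irreducibility of $W$, this image is a single point $a_W$. Since $P/H = V_b$ is irreducible, $H$ acts transitively on the irreducible components of $P$; by $H$-equivariance of $\alb_{\hat Z}$, the set $\{a_W\}$ constitutes a single $H$-orbit in $\Alb(\hat{Z})$ and therefore collapses to one point in $X_1 = \Alb(\hat{Z})/H$. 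Thus $q_0 \circ \theta_0|_{\{b\} \times F_0}$ is constant, so by rigidity $q_0 \circ \theta_0$ factors uniquely as $\psi_0 \circ \pr_1$ for a morphism $\psi_0 : B_0 \to X_1$. Surjectivity of $q_0 \circ \theta_0$ forces $\psi_0$ to be surjective; since $\dim B_0 = \dim X_1$, this makes $\psi_0$ a finite cover.

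The main obstacle is the factorization step, specifically the claim that the $\alb_{\hat Z}$-images of the components of $P$ form a single $H$-orbit. The argument depends on two facts: $P/H = V_b$ (which follows from $\hat{Z}/H = X_0$ and the $H$-invariance of $V_b$ in the fiber product), and the $H$-equivariance of $\alb_{\hat Z}$ coming from Albanese functoriality. Once this is verified, the induction closes: $\dim X_1 < \dim X_0$ since $\dim F_0 > 0$, so the inductive hypothesis furnishes a tower for $X_1$, which when prepended with $q_0$ yields the desired tower for $X$ ending at a Q-abelian $Y$.
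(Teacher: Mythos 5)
Your proposal follows essentially the same route as the paper: apply Theorem \ref{thm:decomp}, pass to a Galois closure $\hat Z\to X$, use the Albanese morphism of $\hat Z$ (surjective with connected fibers by Theorem \ref{thm:surj}, of dimension $\dim B_0$ by Proposition \ref{prop:q}), descend it through the Galois group to get $q_0$, and then factor $q_0\circ\theta_0$ through $\pr_1$ by rigidity; your explicit induction on dimension is a cleaner way to organize the iteration than the paper's ``apply the same process and so on.'' The one weak spot is the factorization step. For a component $W$ of $P=\hat Z\times_{X_0}V_b$, the identity $\beta\circ\alb_{\hat Z}=\pr_1\circ\phi$ only forces $\alb_{\hat Z}(W)$ into a single fiber of $\beta$ if you already know that $\pr_1(\phi(W))$ is a single point, i.e.\ that every component of $\theta_0^{-1}(V_b)$ is contained in a fiber of $\pr_1$; this is true but not immediate, and needs the extra observation that $P=H\cdot\phi^{-1}(\{b\}\times F_0)$ (because $\theta_0\circ\phi$ is the Galois quotient by $H$), so that each $W$ is an $H$-translate of a component of $\phi^{-1}(\{b\}\times F_0)$. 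The whole detour through $P$ and the $H$-orbit of components is avoidable: since $\phi$ is surjective, $q_0(V_b)$ equals the image in $X_1$ of $\alb_{\hat Z}\bigl(\phi^{-1}(\{b\}\times F_0)\bigr)\subset\beta^{-1}(b)$, a finite set; as $V_b$ is connected, $q_0(V_b)$ is a point, and rigidity applies. This is exactly the paper's computation. With that repair your argument is complete.
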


\begin{proof}
Let $\theta: B \times F \to X$ a finite quasi\'etale cover obtained in Theorem \ref{thm:decomp}.
Take a Galois closure 
$$\theta':Z \overset{\varepsilon}{\to} B \times F \overset{\theta}{\to} X$$
of $\theta$.
Let $p': Z \to B'$ be the Albanese morphism of $Z$.
Then $\dim B'=\dim B$ by Proposition \ref{prop:q}.
Let $\phi: B' \to B$ be the induced surjective morphism of Albanese varieties.
Then we obtain the following diagram.
$$
\xymatrix@C=40pt{
Z \ar[r]^\varepsilon \ar[d]_{p'} & B \times F \ar[d]^{\pr_1} \\
B' \ar[r]^{\phi}  & B
}
$$
Let $W$ be a general fiber of $p'$.
Then we have a finite quasi\'etale cover $\varepsilon|_W: W \to \{b\} \times F$ for some $b \in B$.
Now $F$ is $\bQ$-Fano, so $W$ is also $\bQ$-Fano.

Let $G$ be the Galois group of $\theta'$.
Then the action of $G$ on $Z$ descends to $B'$ since $B'$ is abelian and a general fiber of $p'$ is rationally connected.
So the fibration $p'$ is $G$-equivariant.
Taking the quotient $\psi': B' \to X_1=B'/G$, we obtain a surjective morphism $q_0: X \to X_1$ with connected fibers as the quotient of  the $G$-equivariant fibration $p'$.

Take any fiber $\{b\} \times F$ of $\pr_1$.
We compute 
\begin{align*}
(q_0 \circ \theta)(\{b\} \times F) 
&= (q_0 \circ \theta')(\varepsilon^{-1}(\{b\} \times F)) \\
&= (\psi' \circ p') ((p')^{-1}(\phi^{-1}(b))) \\
&= \psi'(\phi^{-1}(b)).
\end{align*}
So $(q_0 \circ \theta)(\{b\} \times F)$ has dimension zero, and hence it is a point.
The rigidity lemma implies that $q_0 \circ \theta$ factors as $q_0 \circ \theta=\psi \circ \pr_1$ for some $\psi: B \to X_1$.

$$
\xymatrix@C=40pt{
Z \ar@/^20pt/[rr]^{\theta'} \ar[r]^\varepsilon \ar[d]_{p'} & B \times F \ar[r]^{\theta} \ar[d]_{\pr_1} & X \ar[d]^{q_0}  \\
B' \ar[r]^{\phi} \ar@/_20pt/[rr]_{\psi'}  & B \ar[r]^{\psi}  & X_1
}
$$

Apply the same process to $X_1$ and so on, 
then we are done.
\end{proof}
\section{Examples}\label{sec:ex}

Classical Q-abelian varieties such as (singular) Kummer surfaces and hyperelliptic surfaces are non-uniruled.
But Theorem \ref{thm:kl} tells us that there may exist uniruled Q-abelian varieties.
In fact, there exists a \textit{rational} Q-abelian surface:

\begin{example}\label{example:ratq-ab}
Set $E_i=\bC/(\bZ \oplus \bZ \sqrt{-1})$,
which has an endomorphism $i:E_i \to E_i$ defined as the multiplication by $\sqrt{-1}$,
and $A=E_i \times E_i$.
Set $g=i \times i \in \Aut(A)$ and let $G < \Aut(A)$ be the subgroup generated by $g$.
Then $\Fix(g^k)$ is finite for any $k \in \bZ$,
so $X=A/G$ is Q-abelian.

The action of $\Stab_G(0)=G$ on $V=T_{A,0}$ is just the linear representation on $\bC^2$ by the group generated by the matrix
$$
M=\begin{pmatrix}
\sqrt{-1} & 0 \\
0 & \sqrt{-1} 
\end{pmatrix}.
$$
We have $\age(M)=1/2<1$,
so $X$ is uniruled (cf.~Theorem \ref{thm:kl}).

Moreover, it is easy to see that $V^G=0$.
So 
$$h^1(X,\mathcal O_X)=\dim H^1(A,\mathcal O_A)^G=\dim V^G=0.$$
Since $X$ has rational singularities,
any resolution $\tilde{X}$ of $X$ satisfies $q(\tilde{X})=q(X)=0$.
Hence $X$ is rational by Castelnuovo's rationality criterion.
\end{example}

There exists an fqav which is rational but not $\bQ$-Fano nor Q-abelian:

\begin{example}\label{example:nonfanorat}
Let $E$ be any ellptic curve and $E_i=\bC/(\bZ \oplus \bZ \sqrt{-1})$,
which has a complex multiplication $i:E_i \to E_i$ defined by the multiplication by $\sqrt{-1}$.
Set $A=E \times E_i$ and consider a finite group $G < \Aut_{\grp}(A)$ generated by $g=[-1] \times i$.
Let $\pi: A \to X$ be the quotient map by $G$.

Then $g^2=\id_E \times [-1] \in G$ is the only element of $G$ such that $\dim \Fix(g)=1$.
Now $\Fix(g)=E \times E_i[2]$ and $\kappa(-K_X)=1$.
So $X$ is not $\bQ$-Fano nor Q-abelian.

On the other hand,
the analytic representation of $g$ on $V=\bC^2$ is given by the matrix 
$$
\begin{pmatrix}
-1 & 0 \\
0 & \sqrt{-1} 
\end{pmatrix}
$$
So $V^G=0$, which implies that $q(X)=\dim V^G=0$.
So $X$ is rational by Castelnuovo's rationality criterion.
\end{example}

\begin{rmk}
Yoshikawa \cite{Yos21} proved that a smooth projective variety which is rationally connected and has an int-amplified endomorphism is of Fano type (cf.~\cite[Corollary 1.4]{Yos21}).
Example \ref{example:ratq-ab} and Example \ref{example:nonfanorat}  show that
we cannot drop the smoothness assumption in Yoshikawa's theorem.
\end{rmk}

%
%
%
%

In dimension two, a uniruled fqav with irregularity zero is rational due to Castelnuovo's criterion.
But it is uncertain (to the auther) whether the implication holds in higher dimension.
We conclude this section with the following question.

\begin{ques}
Is every $\bQ$-Fano fqav rational (or even toric)?
\end{ques}

\end{document}